\keywords{direct image of sheaves of relative forms, curvature, positivity, deformation of maps with fixed target} 
\subjclass[2010]{14D15, 32G05, 32L05, 32J25}
\newcommand*{\hhom}{\mathcal{H}\kern -.7pt om}
\newcommand*{\ext}{\mathcal{E}\kern -.7pt xt}
\theoremstyle{plain}
\newtheorem{thm}{Theorem}[section]
\newtheorem{prop}[thm]{Proposition}
\newtheorem{cor}[thm]{Corollary}
\newtheorem{lem}[thm]{Lemma}
\theoremstyle{definition}
\newtheorem{defn}[thm]{Definition}
\newtheorem{expl}[thm]{Example}
\newtheorem{nota}[thm]{Notation}
\newtheorem*{ackn}{Acknowledgment}
\newtheorem{rmk}[thm]{Remark}
\newcommand{\sA}{\mathcal{A}}
\newcommand{\sC}{\mathcal{C}}
\newcommand{\sF}{\mathcal{F}}
\newcommand{\sL}{\mathcal{L}}
\newcommand{\sO}{\mathcal{O}}
\newcommand{\sS}{\mathcal{S}}
\newcommand{\sT}{\mathcal{T}}
\newcommand{\sX}{\mathcal{X}}
\newcommand{\sY}{\mathcal{Y}}
\newcommand{\mC}{\mathbb{C}}
\newcommand{\bu}{\mathbf{u}}
\newcommand{\tu}{\tilde{u}}
\newcommand{\btu}{\mathbf{\tilde{u}}}
\newcommand{\Ima}{\mathrm{Im}\,}
\newcommand{\vol}{\mathrm{Vol}\,}
\newcommand{\Spec}{\mathrm{Spec}\,}
\newcommand{\Ext}{\mathrm{Ext}}
\numberwithin{equation}{section}
\newcommand{\beba}  {\begin{equation}\begin{array}{rcl}}
\newcommand{\eaee}  {\end{array}\end{equation}}
\def\l@section{\@tocline{1}{0pt}{1pc}{}{}}
\def\l@subsection{\@tocline{2}{0pt}{1pc}{4.6em}{}}
\def\l@subsubsection{\@tocline{3}{0pt}{1pc}{7.6em}{}}
\renewcommand{\tocsection}[3]{%
  \indentlabel{\@ifnotempty{#2}{\makebox[2.3em][l]{%
    \ignorespaces#1 #2.\hfill}}}#3}
\renewcommand{\tocsubsection}[3]{%
  \indentlabel{\@ifnotempty{#2}{\hspace*{2.3em}\makebox[2.3em][l]{%
    \ignorespaces#1 #2.\hfill}}}#3}
\renewcommand{\tocsubsubsection}[3]{%
  \indentlabel{\@ifnotempty{#2}{\hspace*{4.6em}\makebox[3em][l]{%
    \ignorespaces#1 #2.\hfill}}}#3}
\title{Deformations of maps with fixed target and curvature of direct image bundles}
\author{Luca Rizzi}
\address{Luca Rizzi\\Center for Geometry and Physics, Institute for Basic Science (IBS)\\ Pohang, 37673, Korea
	\texttt{lucarizzi@ibs.re.kr}}
\begin{document}

\markboth{}{}

\begin{abstract}
	
	Consider a smooth proper holomorphic fibration of complex manifolds. It is known that the semipositivity of the curvature of the direct image of the relative canonical bundle can be read in terms of the cup product with the Kodaira-Spencer class of the fibers.
	
	Motivated by this result, in this paper we study the relation between deformations of maps with fixed target and the curvature of certain direct image bundles. As a result, we find some curvature formulas and use them to prove a seminegativity result for a vector bundle of relative forms naturally related to the deformation data.
\end{abstract}
\maketitle
\section{Introduction}
Let $f\colon \sX\to B$ be a smooth proper holomorphic fibration of complex manifolds of relative dimension $n$. It is classically known that the vector bundle $E$ associated to the direct image of the relative dualizing sheaf $f_*\omega_{\sX/B}$ is equipped with a natural hermitian metric with semipositive curvature in the sense of Griffiths, see  \cite{G1,gt} and also \cite{Fu}. More in details, consider the case where $B$ is a curve and denote by $X_t:=f^{-1}(t)$ the fiber over a point $t\in B$. The Chern curvature $\Theta$ for this hermitian metric satisfies the formula
\begin{equation}
	\label{griff}
	\langle\Theta u_t,u_t \rangle_t=\lVert\xi_t\cup u_t\lVert^2
\end{equation}
where $u_t\in E_t=H^0(X_t,\omega_{X_t})$ is an element in the fiber of $E$, $\xi_t\in H^1(X_t,T_{X_t})\simeq\Ext^1(\Omega^1_{X_t},\sO_{X_t})$ is the Kodaira-Spencer class of the deformation of $X_t$ induced by $f$, and $\cup$ is the cup product. See \cite{gt} or, for a more recent exposition, \cite{bern2}; see also Section \ref{sec4} of this paper.

So it is clear that when the Kodaira-Spencer class vanishes the curvature also vanishes, and, more in general, the holomorphic top forms $u_t$ with vanishing cup product with $\xi_t$ are exactly the directions where the curvature is not strictly positive.
Furthermore, counterexamples to the famous infinitesimal Torelli problem, see for example \cite{C}, show that  it is also possible that $\xi_t\neq 0$ but $\langle\Theta u_t,u_t \rangle_t=\lVert\xi_t\cup u_t\lVert^2=0$ for every $u_t\in E_t=H^0(X_t,\omega_{X_t})$, that is the infinitesimal (first order) deformation of $X_t$ is non trivial but the curvature is identically zero at $t$.

More recently, in \cite{bern1,bern2} the author studies the curvature of the vector bundle associated to the direct image $f_*(\omega_{\sX/B}\otimes \sL)$, where $\sL$ is a line bundle on $\sX$, and gives formulas analogous to (\ref{griff}) in this more general case. 
The computations rely on the notion of \emph{representative} of a section $u$ of  $f_*(\omega_{\sX/B}\otimes \sL)$.
A representative of $u$, here denoted by $\bu$, is a smooth $(n,0)$-form on $\sX$ with values in $\sL$ which restricts to $u_t$ on the fibers $X_t$. 
The representative of a section $u$ is not unique and one of the key points is to carefully choose a suitable representative in order to obtain more informative curvature formulas. 
In particular, if $\sL$ is semipositive, \cite{bern1} shows that $f_*(\omega_{X/B}\otimes \sL)$ is semipositive in the sense of Nakano, providing an evidence for the Griffiths conjecture, see \cite{G1}, while \cite{bern2} focuses on the relation between the curvature of $f_*(\omega_{X/B}\otimes \sL)$ and the Kodaira-Spencer class of the fibers $X_t$.

More broadly, there is a vast literature on the study of the curvature properties of (higher) direct images of sheaves of twisted relative forms and their subbundles; see for example  \cite{BPW,MT1,MT2,PT,T} and the therein quoted bibliography, just to name a few.

Motivated by these results relating deformations of complex manifolds to curvature of the direct image of the relative canonical sheaf, in this paper we study the relation between the so-called deformations of maps with fixed target and the curvature of certain direct image bundles. As the main results of this study, we find some curvature formulas and use them  to prove a seminegativity result for a vector bundle of relative forms naturally related to the deformation data.

We work locally, so denote by $\sX$  a smooth $n+1$-dimensional K\"ahler manifold, $S$ a smooth compact projective curve and $\Delta\subset \mC$ a complex disk.

We consider a proper holomorphic fibration with connected fibers $G\colon \sX\to \Delta\times S$. In this paper we study the case where $G$ is semistable; see Section \ref{sez0} or \cite{Il} for the definition. Nevertheless many results can be extended to a more general setting.   Call $f:=p_{\Delta}\circ G\colon \sX\to \Delta$ the composition of $G$ with the projection over  $\Delta$ and $\varphi:=p_{S}\circ G\colon \sX\to S$ the composition of $G$ with the projection over  $S$. We denote as before by $X_t$ the $n$-dimensional fiber $f^{-1}(t)$, $t\in\Delta$, and assume that these fibers are smooth. 


The morphism $G$ induces deformations of maps with fixed target $S$:  the  map $g_t:=\varphi_{|X_t}\colon X_t\to S$ at the point $t\in \Delta$ is deformed to the map $g_{t'}:=\varphi_{|X_{t'}}\colon X_{t'}\to S$. In Section \ref{sec2} we recall the fundamentals on deformations of maps with fixed target, for all the details we refer to \cite{sernbook,sern}. Here recall only that this deformation gives, for each $t\in \Delta$, a class $\zeta_t\in \Ext^1(\Omega^1_{X_t/S},\sO_{X_t})$ which can be seen as the analogue of the Kodaira-Spencer class. Furthermore, by fixing a point $s\in S$, the restriction $f_{|\varphi^{-1}(s)}\colon \varphi^{-1}(s)\to \Delta$ gives a deformation (in the usual sense) of the fibers $X_{t,s}:=G^{-1}(t,s)$, with the associated Kodaira-Spencer classes denoted by $\phi_s(\zeta_t)\in \Ext^1(\Omega^1_{X_{t,s}},\sO_{X_{t,s}})$. 

Now consider the sheaf of holomorphic relative differential 1-forms for the morphism $G$ denoted by $\Omega^1_{\sX/\Delta\times S}$ and defined by the exact sequence 
\begin{equation*}
	0\to  f^*\omega_\Delta\oplus \varphi^*\omega_S\to \Omega^1_\sX\to \Omega^1_{\sX/\Delta\times S}\to 0.
\end{equation*}
It turns out that the vector bundle $E$ on $\Delta$ relevant for the study of the deformation of maps given by $G$ is the vector bundle associated to the direct image $f_*(\Omega^{n-1}_{\sX/\Delta\times S}/{\text{Tors}})$, where  $\Omega^{n-1}_{\sX/\Delta\times S}:=\bigwedge^{n-1}\Omega^1_{\sX/\Delta\times S}$ and ${\text{Tors}}$ is its torsion part.

The bundle  $E$ can also be seen as a subbundle of the vector bundle $F$ associated to $f_*(\omega_{\sX/\Delta}\otimes \varphi^*\omega_S^\vee)$ and this provides a connection with the aforementioned works \cite{bern1,bern2} by taking  $\sL=\varphi^*\omega_S^\vee$. If $S$ is of 
general type then $\sL$ is seminegative, nevertheless many of the ideas still apply. For all the details, see Section \ref{sez3}.

In Section \ref{sec4} we define a natural hermitian metric on $E$ and compute the associated Chern connection and curvature. As in the case of \cite{bern1,bern2}, the choice of representative of a section $u$ of $E$ plays a key role. In particular, we consider different representatives depending on  different interpretations of $E$: if we see $E$ only as the vector bundle associated to the direct image $f_*(\Omega^{n-1}_{\sX/\Delta\times S}/{\text{Tors}})$ then a natural representative $\tu$ of $u$ is a smooth relative $(n-1,0)$-form for the fibration $\varphi$, on the other hand  if we see $E$ as a subbundle of $F$, then a suitable representative $\btu$ is a smooth $(n,0)$-form on $\sX$ with values in $\varphi^*\omega_S^\vee$. See Section \ref{subsez} for the precise definitions and the relation between $\tu$ and $\btu$.

Depending on the choice of these representatives we obtain various connection and curvature formulas for $E$. Notably, using the representative $\tu$, these formulas are related to the relative differentials of the fibration $\varphi\colon\sX\to S$ (i.e. differentiation along the fibres of $\varphi$) modulo torsion, denoted by $\bar{\partial}^0_{\sX/S},\partial^0_{\sX/S}$.
For example the following formulas hold for the curvature of $E$:
\begin{thm}\label{thm1}
	Take $u$ a section of $E$ and write $\partial\btu=\mu\wedge dt$ and $\bar{\partial}^0_{\sX/S}\tu=\tilde{\nu}\wedge d\bar{t}+\tilde{\eta}\wedge dt$. Then the curvature of $E$ on $u$ is given at $t$ by 
	\begin{equation*}
		\langle\Theta_E u_t,u_t \rangle_t=-\lVert P'_\perp{\mu_t}\lVert^2_t-2c_{n-1}\int_{X_t}\tilde{\eta}_t\wedge\overline{\tilde{\eta}_t}\wedge g^*_t\textnormal{Vol}S
	\end{equation*}  where $P'_\perp$ is the orthogonal projection on the orthogonal complement of $E_t$ and $c_{n-1}$ is a unimodular constant.
	
	If $\varphi$ is semistable with separated variables, we can also write  $\partial^0_{\sX/S}\tu=\tilde{\mu}\wedge dt$ and the curvature becomes
	\begin{equation*}
		\langle\Theta_E u_t,u_t \rangle_t=-\lVert P'_\perp\tilde{\mu}_t\lVert^2_t-2c_{n-1}\int_{X_t}\tilde{\eta}_t\wedge\overline{\tilde{\eta}_t}\wedge g^*_t\textnormal{Vol}S.
	\end{equation*}
\end{thm}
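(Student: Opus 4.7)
The plan is to apply the Chern-curvature identity
\[
\bar\partial\partial\|u\|^2=\|\nabla^{1,0}_E u\|^2-\langle\Theta_E u,u\rangle
\]
to a local holomorphic section $u$ of $E$ over $\Delta$ restricting to $u_t$, and to isolate $\langle\Theta_E u_t,u_t\rangle_t$ by differentiating the integral expression for $\|u\|^2$ twice in $t$ through one of the two representatives $\btu$ or $\tu$ introduced just before the statement. The computation is parallel in spirit to Berndtsson's derivation of the curvature of $f_{\ast}(\omega_{\mathcal X/\Delta}\otimes\sL)$ in \cite{bern1,bern2}, but must be adapted to the torsion-free quotient defining $E$ and to the seminegative line bundle $\sL=\varphi^{\ast}\omega_S^{\vee}$.

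For the first displayed formula I would work with $\btu$ and exploit the inclusion $E\subset F=f_{\ast}(\omega_{\mathcal X/\Delta}\otimes\varphi^{\ast}\omega_S^{\vee})$. Berndtsson's formula applied to $F$ produces three contributions: an integral against the curvature of $\varphi^{\ast}\omega_S^{\vee}$, a positive Kodaira--Spencer-type term coming from the $d\bar t$-component of $\bar\partial\btu$, and the negative horizontal term $-\|P_\perp\mu_t\|^2$ coming from $\partial\btu=\mu\wedge dt$. Because $\varphi^{\ast}\omega_S^{\vee}$ is the pull-back of a line bundle on the curve $S$, its Chern form is pointwise proportional to $g_t^{\ast}\mathrm{Vol}\,S$, so the first contribution and the Kodaira--Spencer one combine into a single fibre integral against $g_t^{\ast}\mathrm{Vol}\,S$. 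Rewriting $\bar\partial\btu$ in terms of $\bar\partial^0_{\mathcal X/S}\tu=\tilde\nu\wedge d\bar t+\tilde\eta\wedge dt$ identifies the $d\bar t$-piece that survives the pairing with $\overline{\btu}$ as being carried by $\tilde\eta$, so that the two combined terms become $-2c_{n-1}\int_{X_t}\tilde\eta_t\wedge\overline{\tilde\eta_t}\wedge g_t^{\ast}\mathrm{Vol}\,S$. Passing from $F$ to the subbundle $E$ via the standard second-fundamental-form formula $\Theta_E=\Theta_F|_E-\beta^{\ast}\wedge\beta$ refines the horizontal projection $P_\perp$ on $F_t$ to the projection $P'_\perp$ on the orthogonal complement of $E_t$, yielding the first formula.

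For the second formula I would redo the computation using $\tu$ directly, differentiating $\|u\|^2=\int_{X_t}c_{n-1}\,\tu\wedge\overline{\tu}\wedge g_t^{\ast}\mathrm{Vol}\,S$ under the integral sign and expressing all horizontal differentiations through $\partial^0_{\mathcal X/S}\tu=\tilde\mu\wedge dt$. The same differentiation procedure reproduces the $\tilde\eta$-integral, while the horizontal term becomes $-\|P'_\perp\tilde\mu_t\|^2$. The hypothesis that $\varphi$ is semistable with separated variables is used precisely to guarantee that the change of representative $\btu\leftrightarrow\tu$ (mediated by wedging/contracting with $\varphi^{\ast}(ds)$) intertwines $\partial$ and $\partial^0_{\mathcal X/S}$ up to an element of $E_t$, so that $P'_\perp(\mu_t-\tilde\mu_t)=0$.

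The main technical obstacle is the systematic handling of the torsion when passing from honest differential forms on $\mathcal X$ to sections of the quotient $\Omega^{n-1}_{\mathcal X/\Delta\times S}/\mathrm{Tors}$: one must check that the operators $\partial^0_{\mathcal X/S},\bar\partial^0_{\mathcal X/S}$ descend unambiguously to smooth representatives, that the Stokes-type manipulations used to collapse boundary integrals remain valid across the critical locus of $\varphi$, and that the Berndtsson computation performed in the ambient bundle $F$ descends cleanly to $E$ after the second-fundamental-form correction. Controlling these torsion contributions and showing that they recombine into the clean two-term expression of the statement, rather than producing extra error terms, is the delicate point; the semistable-with-separated-variables hypothesis is what eliminates them in the second formula.
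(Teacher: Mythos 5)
Your overall route is the paper's: start from Berndtsson's curvature formula for $F$ evaluated on the special representative $\btu=\rho(\tu)$, pass to the subbundle $E$ via the second fundamental form so that $P_\perp$ is upgraded to $P'_\perp$, and then convert the $\eta_t$-integral into the $\tilde\eta_t$-integral by a pointwise computation of the kind used in Proposition \ref{normeuguali}. Two points in your sketch need correcting, though. First, the contribution of the curvature of $\sL=\varphi^*\omega_S^\vee$ does not ``combine'' with the Kodaira--Spencer term: for a representative $\btu$ in the image of $\rho$ it vanishes identically, since $i\partial\bar{\partial}\phi$ is pulled back from $S$ and $\btu\wedge\varphi^*\gamma=0$ for every form $\gamma$ on $S$ (Lemma \ref{wedge}); this identical vanishing is precisely the reason for choosing $\btu=\rho(\tu)$, and if you instead evaluated that term as a nonzero fibre integral against $g_t^*\textnormal{Vol}S$ you would introduce a spurious third summand absent from the statement. (Relatedly, the Kodaira--Spencer term is carried by the $dt$-component $\eta$ of $\bar{\partial}\btu$, not the $d\bar{t}$-component $\nu$, which gives the $(0,1)$-part of the connection.) Second, for the last displayed formula the paper does not redo the differentiation with $\tu$: the ``semistable with separated variables'' hypothesis enters through Lemma \ref{isotors}, which yields $\Omega^{n}_{\sX/S}/\textnormal{Tors}\cong(\Omega^{n-1}_{\sX/\Delta\times S}/\textnormal{Tors})\otimes f^*\omega_{\Delta}$ and hence makes the expression $\partial^0_{\sX/S}\tu=\tilde{\mu}\wedge dt$ possible at all; then $\mu$ is just the image of $\tilde{\mu}$ under the norm-compatible wedge-product inclusion (Proposition \ref{propanti}), so $\lVert P'_\perp\mu_t\lVert=\lVert P'_\perp\tilde{\mu}_t\lVert$ with no further computation. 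Your alternative of differentiating the integral expression for $\lVert u\lVert^2$ in terms of $\tu$ would presumably also work, but it is not what is needed and would force you to re-justify the Stokes-type manipulations across the critical locus that the paper avoids.
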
 See Theorem \ref{curv2} for all the details.

 Recall that in our case $F$ is the vector bundle associated to $f_*(\omega_{\sX/\Delta}\otimes \varphi^*\omega_S^\vee)$ so when $S$ is a curve of general type then $\sL=\varphi^*\omega_S^\vee$ is seminegative, so in general we cannot expect the curvature of $F$ and of  $E$ to be semipositive. In the above formula, while the first summand is always seminegative, the second could give a positive contribution. 

In Section \ref{sez5}, we finally relate the above curvature formulas to the theory of deformation of maps with fixed target. 

We assume for simplicity that $n=2$, that is $\sX$ has dimension 3 and each $g_t\colon X_t\to S$ is a fibred surface over a fixed curve.
Most of the results generalise to the higher dimensional general case, however the discussion becomes technically more complicated without giving substantial improvement.

 Note that in this case $E$ is the vector bundle associated to $f_*(\Omega^{1}_{\sX/\Delta\times S})$ and its fiber is $ E_t=H^0(X_t,\Omega^{1}_{X_t/S})$. Hence $u_t$ can be seen as a holomorphic relative 1-form for the morphism $g_t$; in particular $u_t$ associates to a general $s\in S$ a holomorphic 1-form on the fiber $X_{t,s}:=g_t^{-1}(s)$, we denote this form by  $u_{t,s}\in H^0(X_{t,s}, \Omega^1_{X_{t,s}})$. 
Similarly, the form $\tilde{\eta}_t$ introduced by Theorem \ref{thm1}, is a relative $(0,1)$-form for the morphism $g_t$, so it associates to a general $s\in S$ a $(0,1)$-form on the fiber $X_{t,s}$ denoted by $\tilde{\eta}_{t,s}$.

We prove that $\tilde{\eta}_t$ actually defines a global  holomorphic section of the vector bundle $R^1{g_t}_*\sO_{X_t}$ relating to the deformation data as follows:  the  $(0,1)$-form  $\tilde{\eta}_{t,s}$ defines a cohomology class in $H^1(X_{t,s},\sO_{X_{t,s}})$ and it holds that $[\tilde{\eta}_{t,s}]=\phi_s(\zeta_t)\cup u_{t,s}$, where we recall that $\zeta_t\in \Ext^1(\Omega^1_{X_t/S},\sO_{X_t})$ is the   class associated to the deformation of maps with fixed target.
%

Theorem \ref{thm1} becomes:
 \begin{prop}\label{prop2}
 If $n=2$, the curvature of $E$ on $u$ is given at $t$ by 
 	\begin{equation*}
 		\langle\Theta_E u_t,u_t \rangle_t=-\lVert P'_\perp{\mu_t}\lVert^2_t{+2c_{n-1}\int_{X_t}|\tilde{\eta}_{t}|^2 g^*_t\textnormal{Vol}S}
 	\end{equation*}  where $\tilde{\eta}_t$ is such that $[\tilde{\eta}_{t,s}]=\phi_s(\zeta_t)\cup u_{t,s}$.
 \end{prop}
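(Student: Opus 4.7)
The plan is to deduce Proposition~\ref{prop2} from Theorem~\ref{thm1} in two steps: first a pointwise rewriting of the integrand, then a Dolbeault-theoretic identification of the restriction $\tilde{\eta}_{t,s}$ with the cup product $\phi_s(\zeta_t)\cup u_{t,s}$.

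For the rewriting step, observe that for $n=2$ the form $\tilde{\eta}_t$ is a smooth relative $(0,1)$-form for $g_t\colon X_t\to S$. In local holomorphic coordinates $(z,w)$ on $X_t$ adapted to $g_t$, so that $g_t$ is projection to $z$, we may write $\tilde{\eta}_t=a(z,w)\,d\bar{w}$ modulo torsion. A direct computation then shows that $-\tilde{\eta}_t\wedge\overline{\tilde{\eta}_t}\wedge g_t^*\textnormal{Vol}S$ is a nonnegative multiple of the volume form on $X_t$, precisely the multiple encoded by the notation $|\tilde{\eta}_t|^2\,g_t^*\textnormal{Vol}S$. Hence the minus sign in Theorem~\ref{thm1} becomes the plus sign in Proposition~\ref{prop2}, after absorbing the unimodular factor $c_{n-1}$ into the definition of the pointwise norm.

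For the cohomological step, fix a general $s\in S$ so that $X_{t,s}=g_t^{-1}(s)$ is a smooth curve. Because $X_{t,s}$ has complex dimension one, the restriction $\tilde{\eta}_{t,s}$ is automatically $\bar{\partial}$-closed and therefore represents a Dolbeault class in $H^1(X_{t,s},\sO_{X_{t,s}})$. Now choose a smooth type $(1,0)$ vector field $V$ on a neighbourhood of $X_{t,s}$ inside the surface $\varphi^{-1}(s)$ that lifts $\partial/\partial t$ under $f$. The restriction $\bar{\partial}V|_{X_{t,s}}$ is a $T_{X_{t,s}}$-valued $(0,1)$-form representing the Kodaira-Spencer class $\phi_s(\zeta_t)$ recalled in Section~\ref{sec2}, and its contraction with $u_{t,s}$ gives a Dolbeault representative of $\phi_s(\zeta_t)\cup u_{t,s}$. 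On the other hand, unpacking the construction of the representative $\tilde{u}$ and of the relative operator $\bar{\partial}^0_{\sX/S}$, the coefficient $\tilde{\eta}$ of $dt$ in $\bar{\partial}^0_{\sX/S}\tilde{u}$ encodes exactly the failure of the pullback of $u$ along the smooth trivialisation by $V$ to be holomorphic in $t$; a direct coordinate comparison then identifies $\tilde{\eta}_{t,s}$ with the contraction of $\bar{\partial}V$ with $u_{t,s}$ on $X_{t,s}$, up to a $\bar{\partial}$-exact term corresponding to a different smooth choice of $V$.

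The main obstacle is keeping control of the torsion quotient in $\Omega^1_{\sX/\Delta\times S}$ and of the non-uniqueness of $\tilde{u}$: one has to verify that the class $[\tilde{\eta}_{t,s}]$ is independent of the local lift $V$ and of the choice of representative $\tilde{u}$ within the class fixed in Theorem~\ref{thm1}, so that the identification with the intrinsic deformation-theoretic cup product $\phi_s(\zeta_t)\cup u_{t,s}$ is well-posed and yields the stated formula.
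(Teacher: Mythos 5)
Your proposal is correct in substance, and its first step coincides with the paper's: the paper also derives the sign change from the observation that $\tilde{\eta}_{t,s}$ is a $(0,1)$-form on the curve $X_{t,s}$, hence primitive, so that $-c_{n-1}\tilde{\eta}_{t,s}\wedge\overline{\tilde{\eta}_{t,s}}=|\tilde{\eta}_{t,s}|^2$ and Theorem \ref{curv2} immediately yields the displayed formula. Where you genuinely diverge is in the cohomological identification $[\tilde{\eta}_{t,s}]=\phi_s(\zeta_t)\cup u_{t,s}$. You propose a direct Dolbeault computation: choose a smooth lift $V$ of $\partial/\partial t$ tangent to $\varphi^{-1}(s)$, represent $\phi_s(\zeta_t)$ by $\bar{\partial}V|_{X_{t,s}}$, and compare its contraction with $u_{t,s}$ to the $dt$-coefficient of $\bar{\partial}^0_{\sX/S}\tu$. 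The paper instead argues sheaf-theoretically: it restricts Diagram (\ref{diagrammaconfronto}) to $X_t$, invokes the already-known fact from \cite{bern2} that $[\eta_t]=\xi_t\cup u_t$, factors $\eta_t$ through a form $\hat{\eta}_t\in\sA^{0,1}_{X_t}$ with $[\hat{\eta}_t]=\zeta_t\cup u_t$ (Proposition \ref{firstprop}), and then uses the Leray exact sequence $0\to H^1(S,\sO_S)\to H^1(X_t,\sO_{X_t})\to H^0(S,R^1{g_t}_*\sO_{X_t})\to 0$ to land on $[\tilde{\eta}_t]$ as a global holomorphic section of $R^1{g_t}_*\sO_{X_t}$ evaluating to $\phi_s(\zeta_t)\cup u_{t,s}$ at general $s$. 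Your route is more elementary and self-contained (it essentially re-proves the relative analogue of Berndtsson's Lemma 2.2 fiberwise), but as written it is only a sketch at the crucial coordinate comparison, it only treats one general fiber at a time, and it does not recover the global holomorphicity of $s\mapsto[\tilde{\eta}_{t,s}]$ that the paper gets for free from Leray and that is used later for the bundle $K$. The paper's route buys reuse of known results and global coherence over $S$; yours buys transparency of where the cup product comes from. Finally, your worry about the torsion quotient is moot here: the paper notes at the start of Section \ref{sez5} that for $n=2$ and $G$ semistable the sheaf $\Omega^1_{\sX/\Delta\times S}$ is torsion free, so no quotient is needed; the only genuine well-definedness issue is the dependence on the representative $\tu$, which (as the paper remarks after Proposition \ref{curvdef}) only moves $\tilde{\eta}_t$ within its class.
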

 See Proposition \ref{curvdef}. 
 
From this formula we see that the sections $u$ of $E$ such that $u_{t,s}$ has vanishing cup product with the class $\phi_s(\zeta_t)$ for general $s\in S$ give directions where the curvature of $E$ is seminegative.
These sections actually identify a vector bundle as follows. In Section \ref{sez5} we construct a morphism
\begin{equation*}
	f_*(\Omega^1_{\sX/\Delta\times S})\to  {p_{\Delta}}_*(R^1G_*\sO_\sX)\otimes \omega_{\Delta}
\end{equation*}
and, recalling that $E$ is the vector bundle associated to $f_*(\Omega^1_{\sX/\Delta\times S})$, the kernel of this morphism defines a bundle $K$ whose sections are exactly  the elements $u$ of $E$ such that $u_{t,s}$ has vanishing cup product with the class $\phi_s(\zeta_t)$ for general $s\in S$. Hence we have the following seminegativity result.
\begin{thm}\label{thm3}
	The vector bundle	$K$ is seminegatively curved.
\end{thm}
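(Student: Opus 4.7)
The plan is to deduce Theorem \ref{thm3} from Proposition \ref{prop2} together with the standard decrease of Griffiths curvature under restriction to a holomorphic subbundle. First I would verify that $K$ is actually a holomorphic subbundle of $E$: by its construction in Section \ref{sez5}, $K$ is the kernel sheaf of a morphism of locally free sheaves $f_*(\Omega^1_{\sX/\Delta\times S})\to {p_\Delta}_*(R^1G_*\sO_\sX)\otimes \omega_\Delta$, hence locally free on the open locus where this morphism has constant rank (one can shrink $\Delta$ if necessary, or pass to the reflexive hull in general). Equip $K$ with the hermitian metric induced from the metric on $E$ defined in Section \ref{sec4}. Then the standard subbundle curvature identity reads
\begin{equation*}
\langle \Theta_K u, u\rangle_t = \langle \Theta_E u, u\rangle_t - \|A(u)\|_t^2,
\end{equation*}
where $A$ denotes the second fundamental form of $K\hookrightarrow E$. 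In particular $\langle \Theta_K u, u\rangle_t \leq \langle \Theta_E u, u\rangle_t$, so the problem reduces to showing that $\langle \Theta_E u, u\rangle_t \leq 0$ for every section $u$ of $K$.

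By Proposition \ref{prop2}, for any section $u$ of $E$ one has
\begin{equation*}
\langle\Theta_E u_t,u_t\rangle_t = -\|P'_\perp\mu_t\|_t^2 + 2c_{n-1}\int_{X_t}|\tilde{\eta}_t|^2\,g_t^*\textnormal{Vol}S.
\end{equation*}
The first summand is manifestly seminegative, so the task is to show that the second summand vanishes whenever $u$ is a section of $K$. Since $g_t^*\textnormal{Vol}S$ restricts to zero along the fibers of $g_t$, a Fubini-type decomposition yields
\begin{equation*}
\int_{X_t}|\tilde{\eta}_t|^2\,g_t^*\textnormal{Vol}S = \int_S\Bigl(\int_{X_{t,s}}|\tilde{\eta}_{t,s}|^2\Bigr)\textnormal{Vol}S(s).
\end{equation*}
By the very definition of $K$ via the morphism of Section \ref{sez5}, for $u$ a section of $K$ one has $[\tilde{\eta}_{t,s}] = \phi_s(\zeta_t)\cup u_{t,s} = 0$ in $H^1(X_{t,s},\sO_{X_{t,s}})$ for general $s\in S$. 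The key technical step is to exploit the freedom in the choice of representative $\tilde u$ of $u$ to arrange that $\tilde{\eta}_{t,s}$ is pointwise the harmonic representative of its cohomology class on each fiber $X_{t,s}$. With such a choice, vanishing of the class forces $\tilde{\eta}_{t,s}=0$ for general $s$, so the inner integral vanishes almost everywhere on $S$ and the outer integral is zero.

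Combining these two steps gives $\langle\Theta_E u,u\rangle_t = -\|P'_\perp\mu_t\|_t^2 \leq 0$ for every section $u$ of $K$, and the subbundle inequality then yields $\langle\Theta_K u,u\rangle_t \leq 0$, proving the seminegativity of $K$. The main obstacle I expect is precisely the adjustment of representative: one must verify that the harmonic projection on each fiber $X_{t,s}$ depends smoothly on $(t,s)$ (which should follow from elliptic regularity and the smoothness of $G$) and that the representative constructed in Section \ref{sec4} can indeed be modified fiberwise to realise this harmonic choice without disturbing the hypotheses used in the proof of Proposition \ref{prop2}. This is entirely in the spirit of the harmonic-projection techniques used in the Berndtsson-type proofs of Griffiths-type curvature formulas in \cite{bern1,bern2}, and adapting those arguments here should complete the proof.
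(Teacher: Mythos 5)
Your argument is correct and follows essentially the same route as the paper: Theorem \ref{semineg} is deduced from Proposition \ref{curvdef} by noting that for a section $u$ of $K$ the class $[\tilde{\eta}_t]$ vanishes, so the representative $\tu$ can be chosen with $\tilde{\eta}_t=0$ as a form, leaving only the seminegative term $-\lVert P'_\perp\mu_t\rVert^2_t$, and then applying the standard curvature decrease for holomorphic subbundles. Your fiberwise harmonic-representative step is simply an explicit realisation of the paper's remark that $\tilde{\eta}_t$ ranges over its cohomology class as $\tu$ varies.
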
 See Theorem \ref{semineg}.

%
\begin{ackn}
	This work was supported by the Institute for Basic Science (IBS-R003-D1). 
	The author is a member of INdAM-GNSAGA.
\end{ackn}

\section{Setting and notation}\label{sez0}
In this section we recall some key definitions and fix the notation. 

Given a holomorphic fibration with connected fibers between smooth complex manifolds $h\colon Y\to Z$, we denote by $\omega_{Y/Z}:=\omega_Y\otimes h^*\omega_Z^\vee$ the relative canonical sheaf  of $h$, and by $\Omega^1_{Y/Z}$ the sheaf of relative holomorphic differential 1-forms defined by the short exact sequence 
\begin{equation}\label{seqrelgen}
0\to h^*\Omega^1_Z\to \Omega^1_Y\to \Omega^1_{Y/Z}\to 0.
\end{equation} We also denote by $\Omega^p_{Y/Z}:=\bigwedge^p \Omega^1_{Y/Z}$ the sheaf of relative holomorphic differential $p$-forms. 

We will mostly work with semistable fibrations, following the definition by Illusie \cite{Il}, see also \cite{KKMSD} for the case $\dim Z=1$.
Let $W$ be a normal crossing divisor on $Y$ and $D$ a normal crossing divisor on $Z$ such that $W=h^{-1}(D)$. Take local coordinates $y_1,\dots,y_n$ on $Y$ such that $W$ is locally given by $y_1\ldots y_k=0$ and local coordinates $z_1,\dots,z_m$ on $Z$ such that $D$ is $z_1\ldots z_p=0$.
Under these assumptions, a semistable map $h$ is given locally by 
\begin{equation*}
	h(y_1,\dots,y_n)=(y_1\cdots y_{s_1},y_{s_1+1}\cdots y_{s_2},\dots,y_{s_{p-1}+1}\cdots y_{s_p},y_{k+1},y_{k+2},\dots,y_{k+{m-p}})
	\label{semistable}
\end{equation*}  with $s_p=k$.
In particular the fibers of $h$ are reduced.

In this situation it is often convenient to consider the logarithmic version of (\ref{seqrelgen}), which is an exact sequence of vector bundles on $Y$
\begin{equation*}\label{seqloggen}
0\to h^*\Omega^1_{Z}(\log D)\to \Omega^1_Y(\log W)\to \Omega^1_{Y/Z}(\log)\to 0.
\end{equation*}
 We briefly recall that if $W$ is locally given by $y_1y_2\cdots y_k=0$ in a suitable local coordinate system, the sheaf $\Omega^1_Y(\log W)$ of {logarithmic differentials} is the locally free $\sO_Y$-module generated by $dy_1/y_1,\ldots,dy_k/y_k,dy_{k+1},\ldots,dy_{n}$; see \cite{De}. 

\begin{rmk}
If $h$ is semistable, $\Omega^1_{Y/Z}$ is torsion free but in general not locally free.
The sheaf $\Omega^1_{Y/Z}$ is in fact a subsheaf, but in general not a subbundle, of the vector bundle $\Omega^1_{Y/Z}(\log)$, see also \cite{R2}.
On the other hand, $\Omega^p_{Y/Z}$ may have torsion elements for $p>1$; in this case there is a morphism $\Omega^p_{Y/Z}\to \Omega^p_{Y/Z}(\log)$ which is not necessarily injective. 
 It is not difficult to see that $\det \Omega^1_{Y/Z}(\log)=\omega_{Y/Z}$.
 
Of course if $h$ is smooth, $\Omega^p_{Y/Z}=\Omega^p_{Y/Z}(\log)$ is a vector bundle for all $p$ and $\det \Omega^1_{Y/Z}=\omega_{Y/Z}$.
\end{rmk}

\section{Preliminaries on deformation of maps with fixed target}\label{sec2}
In this section we recall the fundamentals of the theory of deformations of maps with fixed target. For all the details we refer to \cite{sernbook,sern}, see also \cite{H1,H2,H3}; here we recall the key properties that we will need in the following. 

Let $Y$ be a compact complex manifold. It is well known that the first order deformations of $Y$, that is flat surjective morphisms $\sY\to \Spec(\mC[t]/(t^2))$ such that $Y$ is the closed fiber over the closed point $\Spec\mC\hookrightarrow\Spec(\mC[t]/(t^2))$, are classified by the cohomology group $H^1(Y,T_Y)$. This group is isomorphic to the extension group $\Ext^1(\Omega^1_Y,\sO_Y)$; more explicitly, to a deformation $\sY\to \Spec(\mC[t]/(t^2))$ we associate the following extension of $\Omega^1_Y$ by $\sO_Y$:
$$
0\to\sO_Y\to \Omega^1_{\sY|Y}\to \Omega^1_Y\to 0.
$$

Given a morphism $h\colon Y\to Z$ between two complex varieties, a (first order) deformation of $h$ leaving the target fixed is given by a cartesian diagram
\begin{equation}\label{ext}
\xymatrix{
Y\ar[r]\ar^h[d]&\sY\ar^H[d]\\
Z\ar[r]\ar[d]&Z\times \Spec(\mC[t]/(t^2))\ar^p[d]\\
\Spec\mC\ar[r]&\Spec(\mC[t]/(t^2))
}
\end{equation} where $p$ is the projection on the second factor and $p\circ H$ is flat.

From now on we consider the case when $h$ is a semistable fibration, since this is the relevant case for this paper.
In this case, a first order deformation of  $h$ leaving the target fixed is associated to the element of the group $\Ext^1(\Omega^1_{Y/Z},\sO_Y)$ given by the exact sequence
\begin{equation}
	\label{ext1}
	0\to \sO_Y\to\Omega^1_{\sY/Z|Y}\to \Omega^1_{Y/Z}\to 0.
\end{equation}

Ignoring the middle row of Diagram (\ref{ext}), a deformation of $h$ gives a deformation of $Y$ in the classical sense recalled above. Formally, this is expressed by the homomorphism
\begin{equation}\label{defmap}
	\Ext^1(\Omega^1_{Y/Z},\sO_Y)\to \Ext^1(\Omega^1_{Y},\sO_Y)
\end{equation}
 obtained by applying the Hom functor to the exact sequence (\ref{seqrelgen}).
So essentially, a deformation of $h$ with fixed target consists of a morphism $H$ between a deformation of $Y$ and the trivial deformation of $Z$ and an identification of $h$ with the restriction of $H$ to the closed fiber.

 Note that a deformation of $h$ leaving the target fixed also induces a deformation $\sS_z$ of a fiber $Y_z:=h^{-1}(z)$, $z\in Z$, by the following diagram
\begin{equation}\label{deffib}
	\xymatrix{
	Y_z\ar[r]\ar[d]&	\sS_z\ar[r]\ar[d]&\sY\ar^H[d]\\
		\Spec\mC\ar[r]&\Spec(\mC[t]/(t^2))\ar^-{\{z\}\times id}[r]&Z\times \Spec(\mC[t]/(t^2))
	}
\end{equation}
In particular we have a homomorphism for all $z\in Z$
\begin{equation}
	\label{phi}
\phi_z\colon	\Ext^1(\Omega^1_{Y/Z},\sO_Y)\to \Ext^1(\Omega^1_{Y_z},\sO_{Y_z}).
\end{equation}

The group $\Ext^1(\Omega^1_{Y/Z},\sO_Y)$ has another useful interpretation. Recall the definition of the relative extension sheaf $\ext^1_h$.
\begin{defn}
	Given a morphism $h\colon Y\to Z$, the relative extension sheaf $\ext^p_h$ is  the $p$-th derived functor of $h_*\hhom$.
\end{defn} 
For all the properties of the relative extension sheaves we refer to \cite[Chapter 1]{Bir}. 

In our case, we consider the sheaf (on $Z$) $\ext^1_h(\Omega^1_{Y/Z},\sO_Y)$ since its cohomology controls the deformation theory of $h$.
We recall the following results without proof, see \cite[Lemma 4.4 and Lemma 4.6]{RZ}.
\begin{lem}
	\label{generalg}
	There is an injective morphism of sheaves
	\begin{equation*}
		R^1h_*T_{Y/Z}\hookrightarrow \ext^1_h(\Omega^1_{Y/Z},\sO_Y)
	\end{equation*}
	which is an isomorphism over an open dense subset of $Z$. In particular, for general $z\in Z$ we have the isomorphism
	\begin{equation*}
		\ext^1_h(\Omega^1_{Y/Z},\sO_Y)\otimes \mC(z)\cong H^1(Y_z,T_{Y_z})\cong \textnormal{Ext}^1(\Omega^1_{Y_z},\sO_{Y_z}).
	\end{equation*}
	The morphism $T_Z\to\ext^1_h(\Omega^1_{Y/Z},\sO_Y)$ obtained by applying the functor $h_*\hhom(-,\sO_Y)$ to (\ref{seqrelgen}) induces the Kodaira-Spencer map 
	$$
	T_{Z,z}\to H^1(Y_z,T_{Y_z})
	$$ at the general point of $Z$.
\end{lem}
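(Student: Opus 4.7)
The plan is to derive everything from the local-to-global spectral sequence for relative Ext together with a base-change argument, and to establish the last sentence by naturality of the connecting homomorphism.

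First I would set up the local-to-global spectral sequence
\begin{equation*}
E_2^{p,q}=R^{p}h_*\ext^{q}(\Omega^1_{Y/Z},\sO_Y)\Longrightarrow \ext^{p+q}_h(\Omega^1_{Y/Z},\sO_Y).
\end{equation*}
Its five term exact sequence gives
\begin{equation*}
0\to R^1h_*\hhom(\Omega^1_{Y/Z},\sO_Y)\to \ext^1_h(\Omega^1_{Y/Z},\sO_Y)\to h_*\ext^1(\Omega^1_{Y/Z},\sO_Y).
\end{equation*}
Since $\Omega^1_{Y/Z}$ is torsion free and agrees with the locally free sheaf $\Omega^1_{Y/Z}(\log)$ on the smooth locus $U\subset Y$ of $h$, the sheaf $\ext^q(\Omega^1_{Y/Z},\sO_Y)$ vanishes on $U$ for $q\ge 1$, so it is supported on the discriminant locus $W=Y\setminus U$. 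For a semistable $h$, $W$ maps into a proper closed subset $\Delta(h)\subsetneq Z$, and therefore $h_*\ext^1(\Omega^1_{Y/Z},\sO_Y)$ is supported in $\Delta(h)$. Writing $T_{Y/Z}:=\hhom(\Omega^1_{Y/Z},\sO_Y)$, this already yields the injection $R^1h_*T_{Y/Z}\hookrightarrow\ext^1_h(\Omega^1_{Y/Z},\sO_Y)$ and shows that it is an isomorphism on the open dense subset $Z\setminus\Delta(h)$.

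Next, to identify the fiber of $\ext^1_h$ at a general point $z\in Z$, I would restrict to $Z\setminus\Delta(h)$, where $h$ is smooth and $\Omega^1_{Y/Z}$ is locally free. On that open set $\ext^1_h(\Omega^1_{Y/Z},\sO_Y)=R^1h_*T_{Y/Z}$, and Grauert's base change theorem (applicable since $h$ is proper and flat with smooth fibers, and $T_{Y/Z}$ is locally free and cohomologically flat in this range) gives
\begin{equation*}
R^1h_*T_{Y/Z}\otimes\mC(z)\;\cong\; H^1(Y_z,T_{Y_z}).
\end{equation*}
Since $Y_z$ is smooth, $\Omega^1_{Y_z}$ is locally free, so $\ext^q(\Omega^1_{Y_z},\sO_{Y_z})=0$ for $q\ge 1$ and the local-to-global spectral sequence on $Y_z$ collapses to $H^1(Y_z,T_{Y_z})\cong\Ext^1(\Omega^1_{Y_z},\sO_{Y_z})$, giving the claimed identification of the fiber.

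For the last assertion, I would use the naturality of the derived pushforward of $R\hhom(-,\sO_Y)$ applied to the relative cotangent sequence (\ref{seqrelgen}). This produces a distinguished triangle whose associated long exact sequence contains the connecting map $h_*\hhom(h^*\Omega^1_Z,\sO_Y)\to\ext^1_h(\Omega^1_{Y/Z},\sO_Y)$. Using the projection formula and $h_*\sO_Y=\sO_Z$ (fibers are connected), the left-hand side is identified with $T_Z$, so we get a natural morphism $T_Z\to\ext^1_h(\Omega^1_{Y/Z},\sO_Y)$. At a general $z\in Z$, the fiberwise description of the connecting homomorphism is exactly the one obtained by applying $\Hom(-,\sO_{Y_z})$ to the cotangent sequence of the smooth fiber inclusion $Y_z\hookrightarrow Y$, which by construction is the Kodaira-Spencer map $T_{Z,z}\to H^1(Y_z,T_{Y_z})$. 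Compatibility of all these connecting maps under base change comes from functoriality of the triangle $R\hhom(-,\sO_Y)$ and of $Rh_*$.

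The main technical obstacle is the last step: one must verify that base change preserves the connecting homomorphism, i.e.\ that the formation of the extension class in $\ext^1_h$ commutes with restriction to a general fiber. This is where one uses cohomological flatness on the smooth locus of $h$ together with the fact that $h^*\Omega^1_Z$, $\Omega^1_Y$, and $\Omega^1_{Y/Z}$ are all flat over $Z$ at such a point, so the derived restriction of the cotangent triangle to $Y_z$ is the ordinary cotangent triangle of $Y_z\hookrightarrow Y\to Z$; everything else is a diagram chase.
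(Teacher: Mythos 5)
The paper states this lemma without proof, referring to \cite{RZ}; your argument via the local-to-global spectral sequence for relative Ext (whose five-term sequence gives the injection $R^1h_*T_{Y/Z}\hookrightarrow\ext^1_h(\Omega^1_{Y/Z},\sO_Y)$ with cokernel supported over the discriminant), generic base change, and naturality of the connecting homomorphism is the standard route and is essentially the one followed in that reference. The proof is correct; the only point worth making explicit is that $\hhom(\Omega^1_{Y/Z},\sO_Y)$ agrees with the relative tangent sheaf $\ker(T_Y\to h^*T_Z)$, which follows immediately by applying $\hhom(-,\sO_Y)$ to (\ref{seqrelgen}).
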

For this reason the morphism $T_Z\to\ext^1_h(\Omega^1_{Y/Z},\sO_Y)$  is called Global Kodaira-Spencer map of the family $h$.

The space of global sections of $\ext^1_h(\Omega^1_{Y/Z},\sO_Y)$ encodes the deformation theory of $h$ by the following result, see \cite{sern}.

\begin{lem}
	\label{ss2}
	Let $h\colon Y\to Z$ be a fibration, then we have a  homomorphism
	\begin{equation*}
	 \textnormal{Ext}^{1}(\Omega^1_{Y/Z},\sO_Y)\to H^0(Z,\ext^1_h(\Omega^1_{Y/Z},\sO_Y)).
	\end{equation*}
	 If $Z$ is a curve, this homomorphism is surjective and if $h_*T_{Y/Z}=0$,  it is an isomorphism (even if $Z$ is not a curve). 
	 
	 Its composition with the evaluation at the general fiber
	 $$
	 \textnormal{Ext}^{1}(\Omega^1_{Y/Z},\sO_Y)\to H^0(Z,\ext^1_h(\Omega^1_{Y/Z},\sO_Y))\to \textnormal{Ext}^1(\Omega^1_{Y_z},\sO_{Y_z})
	 $$ is the morphism $\phi_z$ from (\ref{phi}).
\end{lem}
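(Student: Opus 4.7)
My plan is to deduce all three assertions from the Grothendieck local-to-global spectral sequence for Ext. Starting from the factorization
$$\Hom_Y(\Omega^1_{Y/Z}, -) = \Gamma(Z, -) \circ h_*\hhom(\Omega^1_{Y/Z}, -)$$
and the fact that $h_*\hhom(\Omega^1_{Y/Z}, -)$ carries injective $\sO_Y$-modules to $\Gamma(Z, -)$-acyclic sheaves on $Z$, one obtains a convergent spectral sequence
$$E_2^{p,q} = H^p\bigl(Z, \ext^q_h(\Omega^1_{Y/Z}, \sO_Y)\bigr) \Longrightarrow \Ext^{p+q}(\Omega^1_{Y/Z}, \sO_Y).$$

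Next I would extract its five-term exact sequence, which after identifying $\ext^0_h(\Omega^1_{Y/Z}, \sO_Y) = h_*\hhom(\Omega^1_{Y/Z}, \sO_Y)$ with $h_*T_{Y/Z}$ reads
$$0 \to H^1(Z, h_*T_{Y/Z}) \to \Ext^1(\Omega^1_{Y/Z}, \sO_Y) \to H^0(Z, \ext^1_h(\Omega^1_{Y/Z}, \sO_Y)) \to H^2(Z, h_*T_{Y/Z}).$$
The middle edge arrow is the required homomorphism. Its surjectivity when $Z$ is a curve follows from Grothendieck vanishing $H^2(Z, h_*T_{Y/Z}) = 0$ for a Noetherian space of cohomological dimension one; and under the hypothesis $h_*T_{Y/Z} = 0$ both flanking terms vanish, so the same arrow becomes an isomorphism with no constraint on $\dim Z$.

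For the compatibility with evaluation at a general fiber I would invoke naturality of the spectral sequence under the base change $\iota_z \colon \{z\} \hookrightarrow Z$. Together with the identification of Lemma \ref{generalg}, this produces a commutative square whose top row is the edge map above, whose right vertical is the evaluation-at-$z$ isomorphism onto $\Ext^1(\Omega^1_{Y_z}, \sO_{Y_z})$, and whose bottom row is the analogous edge map for the projection $Y_z \to \{z\}$, which degenerates to the identity on $\Ext^1(\Omega^1_{Y_z}, \sO_{Y_z})$. What then remains is to identify the left vertical with $\phi_z$: this amounts to checking that the pullback of the extension (\ref{ext1}) along $Y_z \hookrightarrow Y$ yields the extension class associated to the induced fiber deformation $\sS_z$ of $Y_z$, which is exactly the content of diagram (\ref{deffib}).

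The main obstacle I expect is not the spectral sequence step itself, which is essentially formal, but this last compatibility. While naturality makes it conceptually transparent, a rigorous argument requires working with explicit injective or \v{C}ech representatives of the extension classes and tracking how $\iota_z^*$ acts on them; some additional care is needed because in the semistable setting $\Omega^1_{Y/Z}$ is only torsion free, so the identification of Lemma \ref{generalg} and the interpretation of $h_*T_{Y/Z}$ as a relative tangent sheaf hold only over the open dense locus where the fibration is smooth.
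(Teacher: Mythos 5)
The paper does not prove this lemma at all: it is stated as a recalled result, with the reader referred to \cite{sern} (and the neighbouring Lemma \ref{generalg} to \cite{RZ}), so there is no internal proof to compare against. Your argument is the standard one and is correct where it is carried out in full: the Grothendieck spectral sequence for $\Gamma(Z,-)\circ h_*\hhom(\Omega^1_{Y/Z},-)$ does exist (since $\hhom(\Omega^1_{Y/Z},I)$ is flasque for $I$ injective, so is its pushforward), its edge map is the required homomorphism, and the five-term sequence
\begin{equation*}
0 \to H^1(Z, h_*T_{Y/Z}) \to \Ext^1(\Omega^1_{Y/Z},\sO_Y) \to H^0(Z,\ext^1_h(\Omega^1_{Y/Z},\sO_Y)) \to H^2(Z,h_*T_{Y/Z})
\end{equation*}
gives surjectivity for $\dim Z=1$ by Grothendieck vanishing and an isomorphism when $h_*T_{Y/Z}=0$; note also that the identification $\ext^0_h(\Omega^1_{Y/Z},\sO_Y)=h_*T_{Y/Z}$ is unproblematic even in the semistable case, since dualizing the presentation $h^*\Omega^1_Z\to\Omega^1_Y\to\Omega^1_{Y/Z}\to 0$ always yields $\hhom(\Omega^1_{Y/Z},\sO_Y)\cong\ker(T_Y\to h^*T_Z)$ by left exactness of $\hhom$. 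The only part that remains a sketch is the identification of the composite with $\phi_z$; your plan (naturality of the spectral sequence under restriction to a general fiber, plus a check on \v{C}ech or injective representatives that the restriction of the extension (\ref{ext1}) to $Y_z$ is the class of the induced fiber deformation from (\ref{deffib})) is the right one, and the needed compatibility $\Omega^1_{Y/Z}|_{Y_z}\cong\Omega^1_{Y_z}$ holds at the general $z$ where the fiber is smooth, which is all the statement asserts. You have correctly located the genuine work in that last step rather than in the formal spectral-sequence part.
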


\begin{rmk}
	The condition $h_*T_{Y/Z}=0$ is for example satisfied 
	when the fibers of $h$ are of general type. In this case we can identify $\textnormal{Ext}^{1}(\Omega^1_{Y/Z},\sO_Y)\cong H^0(Z,\ext^1_h(\Omega^1_{Y/Z},\sO_Y))$. 
	Hence a deformation of $h$ with fixed target is uniquely associated to a global section over $Z$ of the sheaf $\ext^1_h(\Omega^1_{Y/Z},\sO_Y)$. This section associates to a general $z\in Z$ the Kodaira-Spencer class of the deformation of $Y_z$  defined in (\ref{deffib}).
\end{rmk}

For the purpose of this paper, we consider geometric deformations, that is deformations defined over a  disk $\Delta\subset \mC$. So for example Diagram (\ref{ext}) becomes
\begin{equation*}
	\xymatrix{
		Y\ar[r]\ar^h[d]&\sY\ar^H[d]\\
		Z\ar[r]\ar[d]&Z\times \Delta \ar^p[d]\\
		\Spec\mC\ar[r]&\Delta
	}
\end{equation*} Of course, exactly as in the case of deformation of varieties, a deformation over a disk defines a first order deformation but the viceversa is not true in general due to the existence of obstructions. For more details on the obstruction theory in the case of deformation of maps with fixed target see \cite{sernbook}.

\section{Vector bundles of relative forms associated to a deformation of maps with fixed target}
\label{sez3}
The goal of this section is to consider a geometric deformation of maps with fixed target and define suitable vector bundles of relative differential forms whose curvature formulas, as we will see in later sections, are related to the deformation data.

\subsection{Deformations of semistable fibrations}
Denote by $\sX$  a smooth $n+1$-dimensional K\"ahler manifold, $S$ a smooth compact projective curve and $\Delta\subset \mC$ a small complex disk.

We consider a proper holomorphic fibration with connected fibers $G\colon \sX\to \Delta\times S$. We call $f$ its composition with the projection over  $\Delta$
$$
f\colon \sX\to \Delta\times S\to \Delta
$$ and denote by $X_t$ the $n$-dimensional fiber $f^{-1}(t)$, $t\in\Delta$. Up to restriction of $\Delta$ we  assume that these fibers are smooth. The composition of $G$ with the projection over $S$ will be denoted by $\varphi$:
$$
\varphi\colon \sX\to \Delta\times S\to S.
$$

In this setting, we have a the deformation of maps with fixed target $S$: for $t\in \Delta$, the holomorphic map $g_t:=\varphi_{|X_t}\colon X_t\to S$ can be interpreted as a deformation of the map $g_0\colon X_0\to S$ over the base point $0\in \Delta$.

In this paper we study the case where $G$ is semistable, nevertheless many results can be extended to a more general setting. 
The following Lemma identifies a convenient choice of local coordinates that we often implicitly use for local computations around the critical points of $G$.
\begin{lem}In our setting, choose $t,s$ local coordinates in $\Delta$ and $S$ respectively. 
There exist a change of coordinates $t'=t$ and $s'=s-l(t)$ on $\Delta\times S$ and local coordinates $x_1,\dots,x_{n+1}$ on $\sX$ such that $G\colon \sX\to \Delta\times S$ is locally given by 
$$
t'=x_{n+1},\quad s'=x_1\dots x_p
$$ with $p\leq n$.
\end{lem}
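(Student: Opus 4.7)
The plan is to invoke Illusie's semistable local normal form for $G$, use the smoothness of $f = p_\Delta \circ G$ to collapse its first monomial component to a single variable, and then reconcile the ``semistable'' target coordinates with the ambient $(t,s)$ via the prescribed change $s' = s - l(t)$. Concretely, fix $x_0 \in \sX$ and centre the chosen coordinates at $t_0 := f(x_0)$ and $s_0 := \varphi(x_0)$. Semistability produces local holomorphic coordinates $y_1, \ldots, y_{n+1}$ on $\sX$ near $x_0$ and $u, v$ on $\Delta \times S$ near $(0,0)$ such that
\begin{equation*}
G = (y_1 \cdots y_{s_1},\ y_{s_1+1} \cdots y_{s_2}).
\end{equation*}
Since $f$ is a smooth submersion on all of $\sX$, its local expression $y_1 \cdots y_{s_1}$ cannot be a monomial in two or more variables; this forces $s_1 = 1$, so $G = (y_1,\ y_2 \cdots y_{p+1})$ with $p := s_2 - 1 \in \{1, \ldots, n\}$.

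Next I identify the semistable target coordinate $u$ with the given $t$. The equality $t \circ G = f = y_1 = u \circ G$, together with the local surjectivity of $G$ onto $\Delta \times S$, forces $u \equiv t$ on a neighbourhood of $(0,0)$. The remaining coordinate $v = v(t,s)$ satisfies $v(0,0) = 0$ and $\partial_s v(0,0) \neq 0$ (so that $(t,v)$ are independent), so the implicit function theorem presents the smooth curve $\{v = 0\}$ locally as the graph $\{s = l(t)\}$ for a unique holomorphic $l$ with $l(0) = 0$. Setting $t' := t$ and $s' := s - l(t)$, the two functions $v$ and $s'$ cut out this curve transversely and therefore satisfy $v = h(t', s') \cdot s'$ for some holomorphic unit $h$ near the origin.

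Finally, I build the $x_i$'s to match the target form $(t', s') = (x_{n+1}, x_1 \cdots x_p)$. Declare $x_{n+1} := y_1$, which gives $t' \circ G = f = x_{n+1}$ at once. Pulling $v = h \cdot s'$ back through $G$ yields
\begin{equation*}
s' \circ G \;=\; \frac{y_2 \cdots y_{p+1}}{H(y)}, \qquad H(y) := h\bigl(y_1,\ y_2 \cdots y_{p+1}\bigr),
\end{equation*}
with $H$ a holomorphic unit. Setting $x_i := y_{i+1}$ for $1 \le i \le p-1$, $x_p := y_{p+1}/H(y)$, and $x_i := y_{i+1}$ for $p < i \le n$, we get $x_1 \cdots x_p = s' \circ G$, and a direct Jacobian check (triangular with nonvanishing diagonal, using $H(0) \neq 0$) confirms that $(x_1, \ldots, x_{n+1})$ is a genuine holomorphic coordinate system at $x_0$.

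The step demanding real care is the last one: absorbing the unit $h$ into exactly one source coordinate (namely $y_{p+1}$) without destroying the independence of the $x_i$'s. Because $H$ is a unit rather than vanishing at the origin, dividing the single factor $y_{p+1}$ by $H$ achieves this. The role of the smoothness hypothesis on $f$ is twofold: it prevents the first semistable monomial from being nontrivial, and it is exactly what allows the required biholomorphism of the target to be written in the prescribed form $s' = s - l(t)$ with the $\Delta$-coordinate preserved.
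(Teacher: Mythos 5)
There is a genuine gap, and it sits exactly where the paper's proof does its work. The semistable normal form hands you adapted coordinates $(u,v)$ on $\Delta\times S$ in which the divisor $D$ of singular values is a union of coordinate axes; these coordinates are a priori unrelated to the product coordinates $(t,s)$. Your assertion that the ``local expression'' of $f$ is the first monomial $y_1\cdots y_{s_1}$ silently identifies $u$ with $t$, and the later step ``$t\circ G=f=y_1=u\circ G$ \ldots forces $u\equiv t$'' is circular: the middle equality $f=y_1$ holds only if $u=t$, which is what is being proved. In general $t=T(u,v)$ for some holomorphic function $T$ of the adapted coordinates, and nothing in the normal form aligns $u$ with the $\Delta$-direction. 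What smoothness of $f$ gives you directly is weaker: if both components of $G$ were genuine monomials (each with at least two factors), then $dG=0$ at the origin of the chart, hence $df=d(t\circ G)=0$, a contradiction; so at most one component is a genuine monomial, i.e.\ $D$ is smooth (or empty) at $(0,0)$. To reach your normal form one must still show that $D$ is transverse to the fibres of $p_{\Delta}$ --- equivalently that $D$ is a graph $s=l(t)$, equivalently $\partial d/\partial s\neq 0$ for a local equation $d$ of $D$ --- and this second exclusion (again via smoothness of $f$, since $T_u(0,0)\neq 0$ is forced) is precisely the remaining content of the paper's proof and is what licenses passing to the adapted system $(t',s')=(t,\,s-l(t))$. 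You do invoke the graph presentation $\{v=0\}=\{s=l(t)\}$, but you justify the needed independence of $dv$ and $dt$ by the same unproven identification $u=t$.

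The rest of your argument --- writing $v=h\cdot s'$ with $h$ a unit, pulling back and absorbing the unit $H$ into the single factor $y_{p+1}$, and the triangular Jacobian check --- is correct and in fact supplies details the paper compresses into ``the claim easily follows from the definition of semistable fibration.'' Note, however, that even after the gap is repaired you cannot set $x_{n+1}:=y_1$ and conclude $t'\circ G=x_{n+1}$: since $t=T(u,v)$ need not equal $u$, the correct choice is $x_{n+1}:=f=T\bigl(y_1,\,y_2\cdots y_{p+1}\bigr)$, which is a legitimate coordinate because $T_u(0,0)\neq 0$.
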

\begin{proof}
Since $G$ is semistable, take the divisor of singular values $D\subset \Delta\times S$. It is not difficult to see that $D$ cannot be singular in $(t,s)=(0,0)$, otherwise $f\colon \sX\to \Delta$ is not smooth. So if locally $D$ is given by an equation $d=0$, at least one of the derivatives $\partial d/\partial t$ and $\partial d/\partial s$ does not vanish in $(0,0)$.

If $\partial d/\partial t\neq 0$ and $\partial d/\partial s=0$ at the origin, once again $f$ is not smooth.

So we are left with the case $\partial d/\partial s\neq 0$. Here by the implicit function  theorem we can express $D$ as $s=l(t)$ for some holomorphic function $l$. We then choose coordinates $t'=t$ and $s'=s-l(t)$ so that $D$ is  $s'=0$ in the new coordinates. The claim easily follows from the definition of semistable fibration.
\end{proof}

In particular, $g_t\colon X_t\to S$ is semistable for every $t$  hence the $g_t$-fibers over a point in $S$ are either smooth or reduced and normal crossing divisors in $X_t$. On the other hand note that $\varphi\colon \sX\to S$ is not necessarily semistable, as the following example shows.

\begin{expl}\label{example1}
We point out that we basically have three cases for $\varphi$ depending on $l(t)$ and its derivative. In fact locally $\varphi$ is given by $\varphi(x_1,\dots,x_{n+1})=(x_1\dots x_p+l(x_{n+1}))$ and
\begin{enumerate}
\item if $\partial l/\partial t\neq 0$ at the origin, then $\varphi$ is smooth in this neighbourhood.
\item if $\partial l/\partial t= 0$ at the origin, then $\varphi$ is not necessarily semistable.
\item  if however $l(t)\equiv 0$ then $\varphi$ is locally given by $\varphi(x_1,\dots,x_{n+1})=(x_1\dots x_p)$ and hence it is semistable.
\end{enumerate} 
\end{expl}

\subsection{The vector bundles of relative forms}
Recall the definitions of the relative canonical sheaves $\omega_{\sX/\Delta}:=\omega_{\sX}\otimes f^*\omega_{\Delta}^\vee$, $\omega_{\sX/S}:=\omega_{\sX}\otimes \varphi^*\omega_{S}^\vee$ and $\omega_{\sX/\Delta\times S}:=\omega_{\sX}\otimes (f^*\omega_{\Delta}\otimes \varphi^*\omega_S)^\vee$. These are line bundles on $\sX$.

On the other hand the sheaves of relative differential forms $\Omega^1_{\sX/\Delta}$, $\Omega^1_{\sX/S}$ and $\Omega^1_{\sX/\Delta\times S}$ are defined respectively by the short exact sequences on $\sX$
\begin{equation}
\label{xsudelta}
0\to f^*\omega_\Delta\to \Omega^1_\sX\to \Omega^1_{\sX/\Delta}\to 0
\end{equation}
\begin{equation}
\label{xsus}
0\to \varphi^*\omega_S\to \Omega^1_\sX\to \Omega^1_{\sX/S}\to 0
\end{equation} and 
\begin{equation}
\label{xsudeltaes}
0\to f^*\omega_\Delta\oplus \varphi^*\omega_S\to \Omega^1_\sX\to \Omega^1_{\sX/\Delta\times S}\to 0.
\end{equation}

From the inclusion $\Omega^1_{\sX/\Delta\times S}\hookrightarrow \Omega^1_{\sX/\Delta\times S}(\log)$ recalled in Section \ref{sez0}, we have a morphism of the top wedge product 
\begin{equation}
	\label{contors}
	\Omega^{n-1}_{\sX/\Delta\times S}\to\Omega^{n-1}_{\sX/\Delta\times S}(\log)= \omega_{\sX/\Delta\times S}=\omega_{\sX/\Delta}\otimes \varphi^*\omega_S^\vee
\end{equation} which is an isomorphism on an open subset of $\sX$ and becomes injective after removing the torsion
\begin{equation}\label{senzators}
\Omega^{n-1}_{\sX/\Delta\times S}/{\text{Tors}}\hookrightarrow \omega_{\sX/\Delta\times S} =\omega_{\sX/\Delta}\otimes \varphi^*\omega_S^\vee.
\end{equation}


Since $f$ is smooth, the $n$-th wedge product of the exact sequence (\ref{xsudelta}) tensored by the line bundle $\varphi^*\omega_S^\vee$ gives the following exact sequence
\begin{equation}
\label{approcciobo}
0\to \Omega^{n-1}_{\sX/\Delta}\otimes f^*\omega_\Delta\otimes \varphi^*\omega_S^\vee\to \Omega^n_\sX\otimes \varphi^*\omega_S^\vee\to \omega_{\sX/\Delta}\otimes \varphi^*\omega_S^\vee\to 0.
\end{equation}
At the same time, the exact sequences (\ref{xsus}) and (\ref{xsudeltaes}) fit in the diagram
\begin{equation*}
\xymatrix{
0\ar[r]& \varphi^*\omega_S\ar[r]\ar@{^{(}->}[d]& \Omega^1_\sX\ar[r]\ar@{=}[d]& \Omega^1_{\sX/S}\ar[r]\ar@{>>}[d]& 0\\
0\ar[r]&f^*\omega_\Delta\oplus \varphi^*\omega_S\ar[r]& \Omega^1_\sX\ar[r]& \Omega^1_{\sX/\Delta\times S}\ar[r]& 0
}
\end{equation*} which shows  that  on $\sX$ we have the following exact sequence
\begin{equation}\label{1forme}
0\to f^*\omega_\Delta\to \Omega^1_{\sX/S}\to \Omega^1_{\sX/\Delta\times S}\to 0.
\end{equation} whose $(n-1)$-wedge product is the sequence
\begin{equation}
\label{approccionuovo}
f^*\omega_\Delta\otimes \Omega^{n-2}_{\sX/\Delta\times S}\to \Omega^{n-1}_{\sX/S}\to \Omega^{n-1}_{\sX/\Delta\times S}\to0
\end{equation} where in general we lose left exactness. Note that the restriction of (\ref{1forme}) to $X_t$ gives the extension associated to the deformation of maps with fixed target (\ref{ext1}) at the point $t$.

Sequences (\ref{approcciobo}) and (\ref{approccionuovo}) fit into the commutative diagram
\begin{equation}\label{diagrammaconfronto}
\xymatrix{
0\ar[r]& \Omega^{n-1}_{\sX/\Delta}\otimes f^*\omega_\Delta\otimes \varphi^*\omega_S^\vee\ar[r]&\Omega^n_\sX\otimes \varphi^*\omega_S^\vee\ar[r]& \omega_{\sX/\Delta}\otimes \varphi^*\omega_S^\vee\ar[r]& 0\\
& f^*\omega_\Delta\otimes \Omega^{n-2}_{\sX/\Delta\times S}\ar[r]\ar[u]& \Omega^{n-1}_{\sX/S}\ar[r]\ar[u]& \Omega^{n-1}_{\sX/\Delta\times S}\ar[r]\ar[u]& 0
}
\end{equation}
where the rightmost vertical arrow is (\ref{contors}).
The other vertical morphisms are given by wedge product, e.g. 
$\Omega^{n-1}_{\sX/S}\to \Omega^n_\sX\otimes \varphi^*\omega_S^\vee$ comes from the wedge product $\Omega^{n-1}_{\sX/S}\otimes \varphi^*\omega_S\to \Omega^n_\sX$. 

%

In \cite{bern1} and \cite{bern2} the author studies the curvature of the vector bundle associated to the direct image $f_*(\omega_{\sX/\Delta}\otimes \sL)$ where $\sL$ is a line bundle on $\sX$ equipped with a smooth metric of
semipositive curvature. In our setting the direct image $f_*\omega_{\sX/\Delta\times S}=f_*(\omega_{\sX/\Delta}\otimes \varphi^*\omega_S^\vee)$ has of course the same structure with $\sL=\varphi^*\omega_S^\vee$. If $S$ is of general type then $\sL$ is seminegative, but nevertheless many formulas coming from \cite{bern1} and \cite{bern2} still hold.

 On the other hand by (\ref{senzators}) we have an injective morphism $f_*(\Omega^{n-1}_{\sX/\Delta\times S}/{\text{Tors}})\hookrightarrow f_*(\omega_{\sX/\Delta}\otimes \varphi^*\omega_S^\vee)$. Actually, up to restriction  of $\Delta$, we assume that it gives an inclusion of vector bundles. 
\begin{defn}\label{notationE}
We denote by $F$ the vector bundle associated to  $f_*(\omega_{\sX/\Delta}\otimes \varphi^*\omega_S^\vee)$ and by $E$ the subbundle associated to $f_*(\Omega^{n-1}_{\sX/\Delta\times S}/{\text{Tors}})$.
\end{defn}
 We will give curvature formulas for $E$ and show how they are related  to the theory of deformations of maps with fixed target.

Note that the fiber $F_t$ over a point $t\in \Delta$ is $F_t=H^0(X_t,\omega_{X_t/S})=H^0(X_t,\omega_{X_t}\otimes g_t^*\omega_S^\vee)$. On the other hand the fiber $E_t$ is the image 
\begin{equation}\label{Efiber}
	E_t=\Ima(H^0(X_t,\Omega^{n-1}_{X_t/S}/{\text{Tors}})\to H^0(X_t,\omega_{X_t/S})).
\end{equation}
In the special case where $g_t$ is smooth then  $\Omega^{n-1}_{X_t/S}$ and $\omega_{X_t/S}$ coincide so that $E=F$, but this is not always the case when $g_t$ is semistable.


\section{Metric, connection and curvature on vector bundles of relative forms}\label{sec4}

In this section we define a suitable hermitian metric on $E$ and give formulas for the associated Chern connection and curvature.
\subsection{Different interpretations of sections of $E$ and of the hermitian metric}
We start by recalling the natural hermitian metric on  $F$ as defined in \cite{bern1,bern2}.
Consider $u$ a section of $F$. It can be seen as a function that maps a general point $t\in \Delta$ to a global holomorphic $(n,0)$-form on $X_t$ with values in $g_t^*\omega_S^\vee$. That is, denoting this $(n,0)$-form by $u_t$,  $u_t\in H^0(X_t,\omega_{X_t}\otimes g_t^*\omega_S^\vee)=H^0(X_t,\omega_{X_t/S})$.
The hermitian norm on $F$ is defined by
\begin{equation}
\label{norma1}
\lVert u_t \lVert^2_{F,t}:=\int_{X_t} c_n u_t\wedge\bar{u}_te^{-\phi}
\end{equation} where $c_n=i^{{n}^2}$ is a unimodular constant and $\phi$ is the weight of a metric on the line bundle $g_t^*\omega_S^\vee$. Of course in our case we consider the metric coming from the K\"ahler structure of $S$, which from now on we assume fixed.

For a section of $E$ the situation is more articulated as highlighted by the following remark.
\begin{rmk}\label{keyrmk}
When $u$ is a section of $E$, we have two parallel interpretations of $u_t$:
\begin{enumerate}
	\item since $E\subseteq F$, we can regard $u_t$ as above, i.e. a global holomorphic $(n,0)$-form on $X_t$ with values in $g_t^*\omega_S^\vee$
	\item since the fiber of $E$ is  $E_t=\Ima (H^0(X_t,\Omega^{n-1}_{X_t/S}/{\text{Tors}})\to H^0(X_t,\omega_{X_t/S}))$, $u_t$ can be seen as a global (non torsion) relative holomorphic $(n-1)$-form for the morphism $g_t$. 
\end{enumerate} 
\end{rmk}
We will be particularly concerned with the second interpretation since it will allow us to relate the metric properties of $E$ to the deformation of maps with fixed target. On the other hand the first relates our discussion to \cite{bern1,bern2}.
\begin{rmk}\label{remarkloc}
	This different interpretation amounts to a different local expression of $u_t\in E_t$.  For example, on a open subset of $X_t$ where $g_t$ is smooth, we take local coordinates $x_1,\dots,x_{n-1},s$ so that $g_t$ is given by projection on the last coordinate.
	If we use the first interpretation, locally $u_t=adx_1\wedge\dots\wedge dx_{n-1}\wedge ds\otimes \frac{\partial}{\partial s}$ for some holomorphic function $a$. On the other hand, if we use the second interpretation, we write $u_t=adx_1\wedge\dots\wedge dx_{n-1}$. The contraction of $ds\otimes \frac{\partial}{\partial s}$ gives exactly the isomorphism  between $\Omega^{n-1}_{X_t/S}$ and $\omega_{X_t/S}=\omega_{X_t}\otimes g_t^*\omega_S^\vee$ on the open set where $g_t$ is smooth.
\end{rmk}

If we consider the first interpretation of $u_t$ as above, it is natural to consider on $E$ the hermitian metric induced by restriction from $F$. On the other hand, if we consider the second interpretation
 of $u_t$ it is natural to define the following norm:
\begin{equation}
\label{norma2}
\lVert u_t \lVert^2_{E,t}:=\int_{X_t} c_{n-1} u_t\wedge\bar{u}_t\wedge g_t^*\vol S
\end{equation} where $\vol S$ is the volume form on $S$ associated to its K\"ahler metric.

We show that the metric (\ref{norma2}) coincides with the one induced by (\ref{norma1}) up to a constant.
\begin{prop}\label{normeuguali}
If $u_t\in E_t\subseteq F_t$ then $\lVert u_t \lVert^2_{F,t}=2\lVert u_t \lVert^2_{E,t}$.
\end{prop}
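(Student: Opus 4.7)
The plan is to reduce to a local computation on the open dense subset where $g_t$ is smooth. Since $g_t$ is semistable, its critical locus is a proper analytic subset of $X_t$, hence measure-zero, so both integrals are unchanged if we restrict the domain of integration to this regular locus. On this locus, by Remark \ref{remarkloc}, we can choose local coordinates $x_1,\ldots,x_{n-1},s$ on $X_t$ in which $g_t$ is the projection on $s$; then a section $u_t$ of $E_t \subseteq F_t$ admits, in the two interpretations, the parallel local expressions
\begin{equation*}
u_t = a\, dx_1\wedge\cdots\wedge dx_{n-1} \quad \text{and} \quad u_t = a\, dx_1\wedge\cdots\wedge dx_{n-1}\wedge ds \otimes \tfrac{\partial}{\partial s},
\end{equation*}
with the same holomorphic coefficient $a$, since the inclusion (\ref{senzators}) sends $\alpha$ to $\tilde\alpha\wedge ds\otimes (\partial/\partial s)$ for any lift $\tilde\alpha$.

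Next, I compute each integrand locally. For the $F$-norm, the weight $\phi$ on $g_t^*\omega_S^\vee$ means $|\partial/\partial s|^2 = e^{-\phi}$, so
\begin{equation*}
c_n\, u_t\wedge\bar u_t\, e^{-\phi} = c_n |a|^2 e^{-\phi}\, dx_1\wedge\cdots\wedge dx_{n-1}\wedge ds \wedge d\bar x_1\wedge\cdots\wedge d\bar x_{n-1}\wedge d\bar s.
\end{equation*}
For the $E$-norm, I will use that the K\"ahler volume form on $S$, with the convention compatible with the above weight, reads $\textnormal{Vol} S = \tfrac{i}{2} e^{-\phi} ds\wedge d\bar s$ in the local frame. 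Then, wedging with $u_t\wedge \bar u_t$ in interpretation (2) and moving $ds$ past the $n-1$ antiholomorphic factors $d\bar x_j$ produces a sign $(-1)^{n-1}$, yielding
\begin{equation*}
c_{n-1}\, u_t\wedge\bar u_t\wedge g_t^*\textnormal{Vol} S = \tfrac{i}{2}(-1)^{n-1} c_{n-1}\, |a|^2 e^{-\phi}\, dx_1\wedge\cdots\wedge dx_{n-1}\wedge ds\wedge d\bar x_1\wedge\cdots\wedge d\bar x_{n-1}\wedge d\bar s.
\end{equation*}

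The final step is to compare the two pointwise expressions. The identity $c_n/c_{n-1} = i^{n^2 - (n-1)^2} = i^{2n-1}$ gives
\begin{equation*}
\frac{c_n}{\tfrac{i}{2}(-1)^{n-1}c_{n-1}} \;=\; \frac{2 i^{2n-1}}{i\,(-1)^{n-1}} \;=\; \frac{2(-1)^{n-1}}{(-1)^{n-1}} \;=\; 2,
\end{equation*}
so the two integrands agree up to a factor of $2$ at every point of the regular locus. Integrating (and using a partition of unity if one wishes to make the chart change explicit) yields $\lVert u_t\rVert^2_{F,t} = 2\lVert u_t\rVert^2_{E,t}$, as claimed.

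I expect no serious obstacle: the entire argument is a careful sign-and-constant bookkeeping once the coordinates are chosen. The only conceptually delicate point is justifying that the regular locus of $g_t$ suffices for computing both integrals; this is immediate because the critical locus of a semistable map is of codimension $\ge 1$ in $X_t$ and both integrands extend continuously (in fact smoothly) across it.
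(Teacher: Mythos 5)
Your proof is correct and follows essentially the same route as the paper: the same choice of local coordinates where $g_t$ is a projection, the same two local expressions of $u_t$, and the same bookkeeping of the constant $c_n = i\,(-1)^{n-1}c_{n-1}$ and the sign $(-1)^{n-1}$ from commuting $ds$ past the antiholomorphic factors. The only difference is cosmetic: you dispose of the critical locus by a measure-zero argument, while the paper asserts that the computation is similar around the critical points; your justification is arguably the cleaner of the two.
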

\begin{proof}
We show this with a local computation on an open subset of $X_t$ where the morphism $g_t$ is smooth, but the computation is similar also around the critical points.

As in Remark \ref{remarkloc}, we take local coordinates $x_1,\dots,x_{n-1},s$ on $X_t$ so that $g_t$ is given by projection on the last coordinate. We write the volume form on $S$ as $\frac{i}{2}\alpha ds\wedge d\bar{s}$, then $e^{-\phi}=\alpha$. 

Take an element $u_t\in E_t\subseteq F_t$. As we have seen above, we have two different interpretations of $u_t$: as holomorphic $(n,0)$-form with values in $g_t^*\omega_S^\vee$ and as  relative holomorphic $(n-1)$-form for the morphism $g_t$. 

In (\ref{norma1}) we are considering the first interpretation, so $u_t=adx_1\wedge\dots\wedge dx_{n-1}\wedge ds\otimes \frac{\partial}{\partial s}$ and
$$
c_nu_t\wedge\bar{u}_te^{-\phi}=c_n|a|^2 dx_1\wedge\dots\wedge dx_{n-1}\wedge ds\wedge d\bar{x}_1\wedge\dots\wedge  d\bar{x}_{n-1}\wedge d\bar{s}\cdot\alpha.
$$ In (\ref{norma2}) we are considering the second interpretation so $u_t=adx_1\wedge\dots\wedge dx_{n-1}$, and
\begin{multline*}
c_{n-1} u_t\wedge\bar{u}_t\wedge g_t^*\vol S=c_{n-1}|a|^2 dx_1\wedge\dots\wedge dx_{n-1}\wedge d\bar{x}_1\wedge\dots\wedge  d\bar{x}_{n-1}\wedge \frac{i}{2}\alpha ds\wedge d\bar{s}=\\
=c_{n-1}\frac{i}{2}(-1)^{n-1}|a|^2\alpha dx_1\wedge\dots\wedge dx_{n-1}\wedge ds\wedge d\bar{x}_1\wedge\dots\wedge  d\bar{x}_{n-1}\wedge d\bar{s}.
\end{multline*}
Since $c_n=c_{n-1}\cdot i\cdot(-1)^{n-1}$, the proof is complete.
\end{proof}
Thanks to this result, from now on we will simply write $\lVert u_t \lVert^2_{t}:=\lVert u_t \lVert^2_{F,t}=2\lVert u_t \lVert^2_{E,t}$ for the norm of a section of $E$.
\begin{rmk}
	If $v\in H^0(X_t,\Omega^{n-1}_{X_t/S})$ is a torsion element, then 
	$$
	\int_{X_t} c_{n-1} v\wedge\bar{v}\wedge g_t^*\vol S=0,
	$$ so $\lVert \cdot \lVert^2_{t}$ is not a norm on $H^0(X_t,\Omega^{n-1}_{X_t/S})$. Since by definition $E$ is the vector space associated to $f_*(\Omega^{n-1}_{\sX/\Delta\times S}/{\text{Tors}})$, $\lVert \cdot \lVert^2_{t}$ is a well defined norm on $E_t$ where the torsion is removed.
\end{rmk}
\subsection{Connection and curvature of $F$}
From \cite{bern1,bern2}, we have an explicit expression for the  Chern connection $D_F = D_F'+ D_F''$ and curvature $\Theta_F$ of the vector bundle $F$ endowed with the hermitian metric (\ref{norma1}). See also \cite{G1,gt} for the original untwisted case $f_*\omega_{\sX/\Delta}$. 

These explicit expressions of connection and curvature rely on the notion of \emph{representative} of a section of $F$. 
If $u$ is a (non necessarily holomorphic) section of $F$, a representative of $u$, here denoted by $\bu$, is a smooth $(n,0)$-form on $\sX$ with values in $\varphi^*\omega_S^\vee$ which restricts to $u_t$ on the fibers $X_t$. 
This means that $i^*_t (\bu) = u_t$
where $i_t$ is the inclusion  $i_t\colon X_t\hookrightarrow \sX$. Following standard notation, we denote by $\sA_{\sX}^{n,0}(\varphi^*\omega_S^\vee)$ the sheaf of smooth $(n,0)$-forms on $\sX$ with values in $\varphi^*\omega_S^\vee$.

Since $\bu$ is holomorphic on the fibers, we write its derivatives as
\begin{equation}
\label{eqbo}
\bar{\partial}\bu=\nu\wedge d\bar{t}+\eta\wedge dt
\end{equation} and
\begin{equation}\label{fi}
\partial^\phi\bu=e^\phi\partial e^{-\phi}\bu=\mu\wedge dt.
\end{equation} In \cite{bern1} it is shown that
\begin{equation}\label{antiholcon}
(D''_Fu)_t=\nu_t d\bar{t}
\end{equation} and 
\begin{equation}\label{holcon}
(D_F'u)_t=P(\mu_t)dt
\end{equation} where $P$ is the orthogonal projection of $(n, 0)$-forms on the space of holomorphic $(n, 0)$-forms.

It turns out that the $(n-1,1)$-form $\eta_t$ has a neat interpretation in terms of the cup product with the Kodaira-Spencer class $\xi_t\in H^1(X_t,T_{X_t})$ associated to the deformation of $X_t$ given by $f\colon \sX\to \Delta$. In fact $\eta_t$ defines the cohomology class of the cup product $\xi_t\cup u_t\in H^{n-1,1}(X_t,g^*_t\omega_S^\vee)$, see \cite[Lemma 2.2]{bern2}. This result shows the connection between deformations of $X_t$ and metric properties of $F$. In the next section we will explore the analogous relation between deformation of maps $g_t\colon X_t\to S$ and metric properties of $E$.

Of course the choice of the representative $\bu$ is not unique; two such choices differ by a term of the form $v\wedge dt$ where $v$ is an $(n-1,0)$-form with values in $\varphi^*\omega_S^\vee$, i.e. smooth sections of the kernel in Sequence (\ref{approcciobo}). As a consequence, the sections $\nu$, $\eta$, $\mu$ depend on this choice; on the other hand the Formulas (\ref{antiholcon}) and (\ref{holcon}) do not. 

The curvature formula for the hermitian metric is
\begin{equation}
\label{curvatura0}
\langle\Theta_F u_t,u_t \rangle_t=-\lVert P_\perp\mu_t\lVert^2_t+f_*(c_ni\partial\bar{\partial}\phi\wedge\bu\wedge\bar{\bu}e^{-\phi})/dV_t-c_n\int_{X_t}\eta_t\wedge\bar{\eta}_te^{-\phi}.
\end{equation}  Here $P_\perp$ is the orthogonal projection on the orthogonal complement of holomorphic forms, $dV_t:=idt\wedge d\bar{t}$ and $f_*$ is the push-forward of $(n,n)$-forms defined by 
$$
\int_\Delta f_*(\alpha)\wedge\beta=\int_X\alpha\wedge f^*(\beta) 
$$ for $\alpha$ a form on $\sX$ and $\beta$ a form on $\Delta$.

Note that in Formula (\ref{curvatura0}), the left member does not depend on the choice of the representative $\bu$, while the right member does. Indeed the choice of the right representative is essential to make Formula (\ref{curvatura0}) more informative.
In particular if $u$ is a holomorphic section of $F$, by \cite{bern1}, \cite[Lemma 2.1]{bern2}, there is a preferred choice of representative $\bu$ such that $\mu_t$ is holomorphic on $X_t$ and $\eta_t$ is primitive on $X_t$. With this choice Formula (\ref{curvatura0}) becomes
\begin{equation}
\label{curvatura1}
\langle\Theta_F u_t,u_t \rangle_t=f_*(c_ni\partial\bar{\partial}\phi\wedge\bu\wedge\bar{\bu}e^{-\phi})/dV_t+\lVert\eta_t\lVert^2.
\end{equation}

For the details on this Formula and its implications see \cite{bern1,bern2}, here we just point out that it comes from the fact that, for the above mentioned preferred representative $\bu$, $P_\perp\mu_t=0$ since $\mu_t$ is holomorphic and that $-c_n\int_{X_t}\eta_t\wedge\bar{\eta}_t=\lVert\eta_t\lVert^2$ since $\eta_t$ is primitive. 


\subsection{Representatives of sections of $E$}\label{subsez}
We go back to the subbundle $E$ of $F$ with the aim of computing the Chern connection $D_E= D_E'+ D_E''$ and curvature $\Theta_E$. By Proposition \ref{normeuguali}, we know that the Chern connection $D_E$ is induced by the connection $D_F$ on $F$ just described, but, as we have just seen in the case of $F$, the choice of a suitable representative of a section is a key factor to obtain more informative curvature formulas.

In fact, following the two interpretations of Remark \ref{keyrmk}, we  have two natural choices of representatives of a section $u$ of $E$ and depending on this choice we will obtain different connection and curvature formulas for $E$.
\begin{enumerate}
	\item if we regard $u_t$ as a global holomorphic $(n,0)$-form on $X_t$ with values in $g_t^*\omega_S^\vee$, then a suitable representative $\bu$ is a smooth $(n,0)$-form on $\sX$ with values in $\varphi^*\omega_S^\vee$. This corresponds to a smooth splitting of the first row of Diagram \ref{diagrammaconfronto}. 
	\item if we regard $u_t$ as a global relative holomorphic $(n-1)$-form for the morphism $g_t$, then a suitable representative $\tu$  is a smooth relative $(n-1,0)$-form for the fibration $\varphi$. This is obtained thanks to the second row of Diagram \ref{diagrammaconfronto} (after removing torsion).
\end{enumerate} 
 We will denote the sheaf of these smooth relative $(n-1,0)$-form by $\sA_{\sX/S}^{n-1,0}$. As before, the choice of the representative $\tu$ is not unique and two such choices differ by a term of the form $v\wedge dt$ where this time $v$ is a relative $(n-2)$-form, c.f. the kernel of the second row of Diagram \ref{diagrammaconfronto}.

Consider the middle vertical morphism in Diagram (\ref{diagrammaconfronto}) for smooth sections: 
\begin{equation}\label{rho}
\rho\colon \sA_{\sX/S}^{n-1,0}\to \sA_{\sX}^{n,0}(\varphi^*\omega_S^\vee);
\end{equation}  we recall it is given by wedge product.

Note that this morphism is not surjective, in particular not every representative $\bu$ is the image of some $\tu$; from now on we will only consider those that are. 
Hence from now on we denote $\btu:=\rho(\tu)$. 

\begin{rmk}\label{ex}
We give an idea of how things can be written locally on an open subset on $\sX$ where the fibration $G$ is smooth. Take $x_1,\dots,x_{n-1},s,t$ local coordinates so that $G$ is given by projection on the last two coordinates which are coordinates of $S$ and $\Delta$ respectively. Then $\tu$ is written as $$\tu=adx_1\wedge\dots\wedge dx_{n-1}+\sum_ib_i dx_1\wedge\dots\wedge\widehat{dx_i}\wedge\dots\wedge dx_{n-1}\wedge dt,$$ where $\widehat{dx_i}$ means that $dx_i$ does not appear in the wedge product. Its image $\btu=\rho(\tu)$ is $$\btu=adx_1\wedge\dots\wedge dx_{n-1}\wedge ds\otimes \frac{\partial}{\partial s}+\sum_ib_i dx_1\wedge\dots\wedge\widehat{dx_i}\wedge\dots\wedge dx_{n-1}\wedge dt\wedge ds\otimes \frac{\partial}{\partial s}.$$ Note that after restriction on the fibers $X_t$ they correspond exactly to the two different interpretations of $u_t$ as in Remark \ref{remarkloc}. On the other hand note that another representative $\bu$ not in the image of $\rho$ is in principle of the form $$\bu=adx_1\wedge\dots\wedge dx_{n-1}\wedge ds\otimes \frac{\partial}{\partial s}+\sum_ib_i dx_1\wedge\dots\wedge\widehat{dx_i}\wedge\dots\wedge dx_{n-1}\wedge dt\wedge ds\otimes \frac{\partial}{\partial s}+c dx_1\wedge\dots\wedge dx_{n-1}\wedge dt\otimes \frac{\partial}{\partial s}.$$

Hence the key difference between a general representative $\bu$ and a representative $\btu$ in the image of $\rho$ is that locally we have that $\btu\wedge ds=0$ while $\bu\wedge ds\neq0$.
%
%
\end{rmk}

\begin{nota}\label{nota}
From now on we fix the notation above, that is, given a section $u$ of $E$, we will denote by $\tu$ a smooth representative of $u$ in $\sA_{\sX/S}^{n-1,0}$, and  by $\btu$ its image in $\sA^{n,0}(\varphi^*\omega_S^\vee)$ via $\rho$.

In particular $\btu$ can be locally expressed as in Remark \ref{ex} and  $\btu\wedge ds=0$. This is clear since $\rho$ is locally given by wedge with $ds$.
\end{nota}

Since we will use that $\btu\wedge ds=0$ many times in the following, we write it here as a slightly more general easy Lemma for future reference. 

 \begin{lem}\label{wedge}Let $h\colon Y\to Z$ be a fibration over a curve and consider the morphisms
  $$
\rho_{i}\colon \sA^{i,0}_{Y/Z}\to \sA_Y^{i+1,0} (h^*\omega_Z^\vee)
$$  
  given by the wedge product. Call $\alpha$ a relative form in $\sA^{i,0}_{Y/Z}$ and $\beta:=\rho_{i}(\alpha)$.

 We have that $\beta\wedge h^*\gamma=0$ for every smooth form $\gamma$ in $\sA^{1,j}_Z$, $j=0,1$.
 \end{lem}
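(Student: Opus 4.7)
The plan is to prove the vanishing by a local computation on $Y$, taking advantage of the fact that $Z$ is a complex curve.

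First I would pick a holomorphic chart on $Z$: an open set $U\subseteq Z$ with a coordinate $s$, so that $\omega_Z|_U$ is trivialized by $ds$ and $h^*\omega_Z^\vee|_{h^{-1}(U)}$ is trivialized by the dual frame, which I denote $\partial/\partial s$. By definition the wedge-product morphism $\rho_i$ (the natural analogue for $h$ of the middle vertical arrow of Diagram~(\ref{diagrammaconfronto})) is induced by the map
\[
\sA^{i,0}_{Y/Z}\otimes h^*\omega_Z\longrightarrow \sA^{i+1,0}_Y,\qquad \alpha\otimes h^*(ds)\longmapsto \tilde\alpha\wedge h^*(ds),
\]
where $\tilde\alpha$ is any smooth $(i,0)$-form on $h^{-1}(U)$ lifting $\alpha$ through $\sA^{i,0}_Y\twoheadrightarrow \sA^{i,0}_{Y/Z}$. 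Two such lifts differ by a form of the shape $h^*(ds)\wedge(\text{stuff})$, which is killed after wedging with $h^*(ds)$; hence
\[
\beta\big|_{h^{-1}(U)}\;=\;\tilde\alpha\wedge h^*(ds)\,\otimes\,\tfrac{\partial}{\partial s}
\]
is well-defined and intrinsic.

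Next I would exploit that $\dim_{\mathbb C}Z=1$. Any smooth form $\gamma\in\sA^{1,j}_Z(U)$ with $j\in\{0,1\}$ must contain $ds$ as a factor: explicitly $\gamma = f\,ds$ if $j=0$ and $\gamma = f\,ds\wedge d\bar s$ if $j=1$, for some smooth $f$ on $U$. In particular $ds\wedge\gamma = 0$ on $U$, and pulling back,
\[
h^*(ds)\wedge h^*\gamma \;=\; h^*(ds\wedge\gamma)\;=\;0\quad\text{on }h^{-1}(U).
\]
Wedging with $\tilde\alpha$ and tensoring with $\partial/\partial s$ then yields
\[
\beta\wedge h^*\gamma\big|_{h^{-1}(U)}\;=\;\tilde\alpha\wedge h^*(ds)\wedge h^*\gamma\,\otimes\,\tfrac{\partial}{\partial s}\;=\;0.
\]

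Since the open sets of the form $h^{-1}(U)$, as $U$ ranges over coordinate charts of $Z$, cover $Y$, and the equality $\beta\wedge h^*\gamma=0$ has been verified on each such chart, it holds globally on $Y$. The statement for $j=0$ and for $j=1$ are handled uniformly in the last displayed line, so nothing further is required. There is really no serious obstacle: the only point requiring minimal care is independence of the local representation of $\beta$ from the choice of lift $\tilde\alpha$, but this is immediate from $h^*(ds)\wedge h^*(ds)=0$.
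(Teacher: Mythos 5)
Your proof is correct and follows exactly the route the paper has in mind: the paper leaves this lemma unproved as "easy," justifying it in Notation \ref{nota} by the observation that $\rho$ is locally given by wedging with $ds$, which is precisely your local computation combined with the fact that any $(1,j)$-form on a curve contains $ds$ as a factor. Your additional check that the local expression of $\beta$ is independent of the lift $\tilde\alpha$ is a worthwhile touch but raises no new issues.
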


\subsection{Connection and curvature of $E$ using the representative $\btu$}
 We have the following result on the connection on $E$ using the representative $\btu=\rho(\tu)$.
\begin{prop}\label{primaver}
Take $u$ a section of $E$ and $\btu$ a representative in $\sA^{n,0}(\varphi^*\omega_S^\vee)$ as above.
The (1,0)-part of the Chern connection of $E$ is given by $(D'_Eu)_t=P'({\mu_t})dt$ where $\partial\btu={\mu}\wedge dt$ and $P'$ is the orthogonal projection of smooth $(n, 0)$-forms onto $E_t$. 
The (0,1)-part of the Chern connection is given by $(D''_Eu)_t={\nu_t}d\bar{t}$ where $\bar{\partial}\btu={\nu}\wedge d\bar{t}+{\eta}\wedge dt$. 
\end{prop}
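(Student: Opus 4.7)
The plan is to reduce Proposition \ref{primaver} to the Chern-connection formulas for $F$ already established in \cite{bern1,bern2}, using that $E\subseteq F$ is a holomorphic subbundle and that, by Proposition \ref{normeuguali}, the hermitian metric on $E$ is (up to the factor $2$) the one induced from $F$. First I would recall from \cite{bern1,bern2} that for any smooth representative $\bu\in\sA_\sX^{n,0}(\varphi^*\omega_S^\vee)$ of a section $u$ of $F$, writing $\bar{\partial}\bu=\nu\wedge d\bar{t}+\eta\wedge dt$ and $\partial^\phi\bu=\mu\wedge dt$, one has $(D''_F u)_t=\nu_t\,d\bar{t}$ and $(D'_F u)_t=P(\mu_t)\,dt$, where $P$ is the orthogonal projection of smooth $(n,0)$-forms onto $F_t$; these identities are purely differential and do not rely on any positivity of $\varphi^*\omega_S^\vee$. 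A small but crucial observation is that for the specific representative $\btu=\rho(\tu)$ one has $\partial^\phi\btu=\partial\btu$: since $\phi=\varphi^*\psi$ is pulled back from $S$, $\partial\phi$ is a multiple of $\varphi^*(ds)$, and by Lemma \ref{wedge} $\btu\wedge\varphi^*(ds)=0$, so $\partial\phi\wedge\btu=0$. Hence the $\mu$ in the statement, defined by $\partial\btu=\mu\wedge dt$, coincides with the $\mu$ of the $F$-formula.

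Next, I would invoke the standard relation between the Chern connections of a holomorphic subbundle and its ambient bundle with induced metric: for $u$ a smooth section of $E$, $D''_E u=D''_F u$ (holomorphicity of $E$ ensures $\bar{\partial}$ preserves sections of $E$ within $F$) and $D'_E u=P'_E(D'_F u)$, where $P'_E$ is the orthogonal projection $F\to E$. The first identity immediately gives $(D''_E u)_t=\nu_t\,d\bar{t}$, with $\nu_t\in E_t$ automatic by the holomorphicity of the subbundle. For the $(1,0)$-part, I would use that $E_t\subseteq F_t$ inside the ambient space of smooth $(n,0)$-forms, so the ambient orthogonal projection $P'$ onto $E_t$ satisfies $P'=P'\circ P$: any vector orthogonal to $F_t$ is a fortiori orthogonal to $E_t$, hence $P'(\mu_t-P(\mu_t))=0$. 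This yields $(D'_E u)_t=P'(P(\mu_t))\,dt=P'(\mu_t)\,dt$, as claimed.

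Optionally, one can verify $\nu_t\in E_t$ directly from the structure of $\btu=\rho(\tu)$ without invoking the abstract subbundle argument. From $\btu\wedge\varphi^*(ds)=0$ and $\bar{\partial}(\varphi^*(ds))=0$ one obtains $\bar{\partial}\btu\wedge\varphi^*(ds)=0$; separating this into its $dt$- and $d\bar{t}$-components (legitimate because $\nu\wedge\varphi^*(ds)$ and $\eta\wedge\varphi^*(ds)$ contain neither $dt$ nor $d\bar{t}$) gives $\nu\wedge\varphi^*(ds)=0$ and $\eta\wedge\varphi^*(ds)=0$ on $\sX$, which upon restriction shows that $\nu_t$ is locally (outside the critical locus of $g_t$) in the image of the wedge morphism from relative $(n-1,0)$-forms, i.e.\ in $E_t$. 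There is no substantial obstacle in the argument: once the two identifications of the metric (Proposition \ref{normeuguali}) and of the two representatives (Remark \ref{keyrmk}, Notation \ref{nota}) are set up, the proposition is a routine consequence of the subbundle connection identities; the finer structure forced by $\btu=\rho(\tu)$ will play its real role only later, in the curvature formulas of Theorem \ref{thm1}.
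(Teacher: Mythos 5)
Your proposal is correct and follows essentially the same route as the paper: reduce to the connection formulas for $F$, observe via Lemma \ref{wedge} that $\partial^\phi\btu=\partial\btu$ for the representative $\btu=\rho(\tu)$ since $\phi$ is pulled back from $S$, note that $D''_E=D''_F$ because $E\subseteq F$ is a holomorphic subbundle, and obtain the $(1,0)$-part by composing the projection onto holomorphic forms with the projection onto $E_t$, using $P'\circ P=P'$. The only addition is your optional direct check that $\nu_t\in E_t$, which the paper leaves implicit.
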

\begin{proof}
These formulas can be derived by the expression of the Chern connection for $F$ recalled in (\ref{antiholcon}) and (\ref{holcon}). 

For $D'_E$, write 
$$
\partial^\phi\btu=e^\phi\partial e^{-\phi}\btu=\mu\wedge dt
$$ as in (\ref{fi}) and note that since $\phi$ is pullback of a local function on $S$, by Lemma \ref{wedge}, $\partial^\phi\btu=\partial\btu$ for our choice of $\btu=\rho(\tu)$. Now $D'_F$ is given by $$D_F'u=P(\mu)dt$$ where $P$ is the orthogonal projection of $(n, 0)$-forms on the space of holomorphic $(n, 0)$-forms, hence $D'_E$ is given by the orthogonal projection of $P(\mu)$ into $E_t$. The composition of these two orthogonal projections is $P'$.

The formula for $D''_E$ is trivial from (\ref{antiholcon}) since $E\subseteq F$ is a holomorphic subbundle.
\end{proof} 
\begin{rmk}
	In particular note that the $(1,0)$-part of the connection is not related to the differential $\partial^\phi$ as in (\ref{holcon}), but to the standard holomorphic differential $\partial$. This is due to the particular choice of the representative $\btu$.
\end{rmk}
By the same reasoning we also have 
\begin{cor} 
\label{secondaff}
With the same notation, the second fundamental form of $E$ in $F$ is given by $(P({\mu})-P'({\mu}))dt$.
\end{cor}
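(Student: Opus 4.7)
The plan is to invoke the standard fact that for a holomorphic subbundle $E$ of a Hermitian vector bundle $F$ equipped with the restricted metric, the second fundamental form $A_E$ is precisely the difference $D'_F - D'_E$ of the $(1,0)$-parts of the two Chern connections, and that this difference automatically takes values in the orthogonal complement $E^\perp \subseteq F$. So the task reduces to computing both $D'_F u$ and $D'_E u$ using the \emph{same} representative $\btu = \rho(\tu)$, and then subtracting.

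First I would apply formula (\ref{holcon}) to $u$ viewed as a section of $F$, using the representative $\btu$. As was already observed in the proof of Proposition \ref{primaver}, the fact that $\phi$ is pulled back from $S$ together with Lemma \ref{wedge} gives $\partial\phi \wedge \btu = 0$, so $\partial^\phi \btu = \partial \btu = \mu \wedge dt$. Hence $D'_F u = P(\mu)\,dt$. Next, Proposition \ref{primaver} applied to the same representative yields $D'_E u = P'(\mu)\,dt$. Subtracting gives $A_E(u) = \bigl(P(\mu) - P'(\mu)\bigr)\,dt$, which is the claimed expression.

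As a consistency check I would verify that this output indeed lies in the orthogonal complement of $E_t$ inside $F_t$, as required of a genuine second fundamental form. Since $E_t \subseteq F_t$ and $P'$ is the orthogonal projection onto $E_t$ while $P$ is the orthogonal projection onto $F_t$, one has $P' \circ P = P'$, so $P(\mu) - P'(\mu) \in E_t^\perp \cap F_t$. There is no substantial obstacle in this argument: the only point requiring care, and the reason for insisting on the representative $\btu = \rho(\tu)$ throughout, is to ensure that the same $\mu$ enters both connection formulas, so that the subtraction computing $A_E(u)$ is meaningful.
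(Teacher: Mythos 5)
Your argument is correct and is essentially the paper's own: the paper derives this corollary "by the same reasoning" as Proposition \ref{primaver}, namely by comparing $D'_F u = P(\mu)\,dt$ (valid for the representative $\btu=\rho(\tu)$ since $\partial^\phi\btu=\partial\btu$ by Lemma \ref{wedge}) with $D'_E u = P'(\mu)\,dt$ and subtracting. No gaps.
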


For the curvature, consider a section $u$ of $E$. We fix again a smooth representative $\btu$ of $u$ as above.
With this choice of representatives, we have a simplification of Formula (\ref{curvatura0}):
\begin{equation}\label{curvF}
\langle\Theta_F u_t,u_t \rangle_t=-\lVert P_\perp\mu_t\lVert^2_t-c_n\int_{X_t}\eta_t\wedge\bar{\eta}_te^{-\phi}
\end{equation}
This comes from the fact that $\partial\bar{\partial}\phi\wedge\btu\wedge\overline{\btu}=0$ by Lemma \ref{wedge}.
\begin{rmk}
	Both Formula (\ref{curvatura1}) and Formula (\ref{curvF}) give a simplification of the general Formula (\ref{curvatura0}). In both of this cases, one of the summands disappears, depending on the choice of the representative of $u$. 
\end{rmk}
Using the same notation of Proposition \ref{primaver}, we get the following formula for the curvature of $E$.
\begin{thm}\label{curv1}
The curvature of $E$ on $u$ is given at $t$ by 
\begin{equation*}
\langle\Theta_E u_t,u_t \rangle_t=-\lVert P'_\perp{\mu_t}\lVert^2_t-c_n\int_{X_t}\eta_t\wedge\bar{\eta}_te^{-\phi}
\end{equation*} where $P'_\perp$ is the orthogonal projection of $(n,0)$-forms on the orthogonal complement of $E_t$.
\end{thm}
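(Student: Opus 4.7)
The plan is to combine formula (\ref{curvF}) for the curvature of the ambient bundle $F$ with the classical Gauss--Griffiths identity relating the Chern curvature of a holomorphic hermitian subbundle to that of the ambient bundle via the second fundamental form. By Proposition \ref{normeuguali} the metric (\ref{norma2}) on $E$ is, up to the positive constant $2$, the restriction of the $F$-metric, so the induced Chern connections and Chern curvatures agree and I may freely compute $\Theta_E$ as the curvature of the hermitian subbundle $E\subseteq F$.

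First I would write down the Gauss--Griffiths formula: for every smooth section $u$ of $E$,
\begin{equation*}
	\langle \Theta_E u_t, u_t \rangle_t = \langle \Theta_F u_t, u_t \rangle_t - \|A(u_t)\|_t^2,
\end{equation*}
where $A\in\sA^{1,0}(\mathrm{Hom}(E,E^\perp\cap F))$ is the second fundamental form of $E$ in $F$. By Corollary \ref{secondaff}, $A(u)$ at $t$ is $(P(\mu_t)-P'(\mu_t))\,dt$, and therefore $\|A(u_t)\|_t^2=\|P(\mu_t)-P'(\mu_t)\|_t^2$. Substituting the expression (\ref{curvF}) for $\langle\Theta_F u_t,u_t\rangle_t$ then yields
\begin{equation*}
	\langle \Theta_E u_t, u_t \rangle_t = -\|P_\perp \mu_t\|_t^2 - \|P(\mu_t) - P'(\mu_t)\|_t^2 - c_n \int_{X_t} \eta_t \wedge \bar{\eta}_t\, e^{-\phi}.
\end{equation*}

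To finish I would exploit the orthogonal decomposition $\mu_t = P_\perp\mu_t + (P(\mu_t)-P'(\mu_t)) + P'(\mu_t)$ in the pre-Hilbert space of smooth $(n,0)$-forms on $X_t$: the three summands lie in the mutually orthogonal subspaces $F_t^\perp$, $E_t^\perp\cap F_t$, and $E_t$. Since $P'_\perp=I-P'$ is the orthogonal projection onto the direct sum of the first two, Pythagoras gives $\|P'_\perp \mu_t\|_t^2 = \|P_\perp \mu_t\|_t^2 + \|P(\mu_t)-P'(\mu_t)\|_t^2$, which collapses the two negative norm terms above into the single term $-\|P'_\perp\mu_t\|_t^2$ appearing in the statement.

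I do not anticipate a genuinely difficult step here; the only point worth careful checking is that the second fundamental form appearing in the Gauss--Griffiths formalism truly coincides with the form $(P(\mu_t)-P'(\mu_t))\,dt$ produced by Corollary \ref{secondaff}. This follows at once from Proposition \ref{primaver}, since $D_F'u - D_E'u = (P(\mu_t)-P'(\mu_t))\,dt$ is a $(1,0)$-form taking values in $E_t^\perp\cap F_t$, which is by definition the second fundamental form applied to $u$. Once this identification is in place, the proof reduces entirely to the orthogonal decomposition above.
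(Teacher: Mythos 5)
Your proposal is correct and follows essentially the same route as the paper: the paper likewise combines the curvature formula (\ref{curvF}) for $F$ with the second fundamental form from Corollary \ref{secondaff} via the standard subbundle curvature identity, and then concludes with the same Pythagorean identity $\lVert P_\perp\mu_t\lVert^2+\lVert (P-P')(\mu_t)\lVert^2=\lVert P'_\perp\mu_t\lVert^2$. Your explicit justification of the orthogonal decomposition and of the identification of the second fundamental form is a welcome expansion of what the paper leaves implicit.
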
\begin{proof}
By (\ref{curvF}) and the explicit formulation of the second fundamental form in Corollary \ref{secondaff}, we have that 
\begin{equation*}
\langle\Theta_E u_t,u_t \rangle_t=-\lVert P_\perp\mu_t\lVert^2_t-c_n\int_{X_t}\eta_t\wedge\bar{\eta}_te^{-\phi}-\lVert (P-P')({\mu_t})\lVert^2
\end{equation*} Now we conclude since $\lVert P_\perp\mu_t\lVert^2+\lVert (P-P')({\mu_t})\lVert^2=\lVert P'_\perp{\mu_t}\lVert^2$.
\end{proof}
\begin{rmk}
In \cite{bern1,bern2} the author shows that the curvature of the vector bundle associated to the direct image $f_*(\omega_{\sX/\Delta}\otimes \sL)$ is semipositive when $\sL$ is a line bundle  equipped with a smooth metric of
semipositive curvature. In our case $F$ is the vector bundle associated to $f_*\omega_{\sX/\Delta\times S}=f_*(\omega_{\sX/\Delta}\otimes \varphi^*\omega_S^\vee)$. If $S$ is a curve of general type then $\sL$ is seminegative, so in general we cannot expect the curvature of $F$ and of  $E$ to be semipositive.
\end{rmk}
\subsection{Connection and curvature of $E$ using the representative $\tu$ and relative differentials} Proposition \ref{primaver} and Theorem \ref{curv1} give formulas for the connection and curvature of $E$ using the representative $\btu$ and its derivatives. It is a natural question to ask if we can interpret the above formulas in terms of the representative $\tu$, which we recall is a smooth relative form in $\sA_{\sX/S}^{n-1,0}$, and its relative differentials.

Recall that, given a holomorphic fibration of smooth complex manifolds $h\colon Y\to Z$, the relative differential is usually denoted by $d_{Y/Z}$:
$$
d_{Y/Z}\colon \sA^p_{Y/Z}\to \sA^{p+1}_{Y/Z}.
$$ By $d^0_{Y/X}$ we denote its composition with the quotient by the torsion part 
$$
d_{Y/Z}^0\colon \sA^p_{Y/Z}\to \sA^{p+1}_{Y/Z}\to \sA^{p+1}_{Y/Z}/\text{Tors}.
$$
In the same fashion we will consider $\partial_{Y/Z}$, $\bar{\partial}_{Y/Z}$, $\partial_{Y/Z}^0$ and $\bar{\partial}^0_{Y/Z}$.
 
Consider $u$ a section of $E$ and $\tu$, $\btu$ representatives as in Notation \ref{nota}; the first natural question is to compare $\partial_{\sX/S}\tu$ and $\bar{\partial}_{\sX/S}\tu$ with $\partial \btu$ and $\bar{\partial}\btu$ respectively.

For the anti-holomorphic derivative, we note that $\bar{\partial}_{\sX/S}\tu$ is a smooth section in $\sA^{n-1,1}_{\sX/S}$ and $\bar{\partial}\btu$ in $\sA^{n,1}_{\sX}(\varphi^*\omega^\vee_S)$.
We can write as in Proposition \ref{primaver}
\begin{equation}\label{etauno}
\bar{\partial}\btu=\nu\wedge d\bar{t}+\eta\wedge dt
\end{equation}  and also
\begin{equation}\label{etatilde}
\bar{\partial}^0_{\sX/S}\tu=\tilde{\nu}\wedge d\bar{t}+\tilde{\eta}\wedge dt.
\end{equation}
Note that $\nu$  and $\tilde{\nu}$ define smooth sections of $\sA^{n,0}_{\sX/\Delta}( \varphi^*\omega^\vee_S)$ and $\sA^{n-1,0}_{\sX/\Delta\times S}/\textnormal{Tors}$ respectively and $\eta$  and $\tilde{\eta}$ define smooth sections of $\sA^{n-1,1}_{\sX/\Delta}( \varphi^*\omega^\vee_S)$ and $\sA^{n-2,1}_{\sX/\Delta\times S}/\textnormal{Tors}$.
 We will analyse $\eta$  and $\tilde{\eta}$ in the next section, here we observe that a local computation shows that $\nu$ is the image of $\tilde{\nu}$ via the inclusion
 $$
 \sA^{n-1,0}_{\sX/\Delta\times S}/\textnormal{Tors}\hookrightarrow \sA^{n,0}_{\sX/\Delta}( \varphi^*\omega^\vee_S)
$$ given by the wedge product.

The restrictions of $\tilde{\nu}$ and $\nu$ to the fibers $X_t$ are independent of any choice of representative so the $(0,1)$-part of the Chern connection of $E$ which  is expressed in Proposition \ref{primaver} thanks to the usual derivative and $\nu$, can also be written in terms of the relative derivative and $\tilde{\nu}$ as follows:
\begin{prop}
\label{eta}
The $(0,1)$-part of the Chern connection of $E$ is given by the formula $(D''_Eu)_t=\tilde{\nu}_td\bar{t}$ where $\tilde{\nu}$ is defined by $\bar{\partial}^0_{\sX/S}\tu=\tilde{\nu}\wedge d\bar{t}+\tilde{\eta}\wedge dt$. 
\end{prop}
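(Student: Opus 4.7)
The plan is to derive this directly from Proposition \ref{primaver} by identifying the form $\nu_t$ appearing there with $\tilde\nu_t$. More precisely, I would show that $\nu$ is the wedge-product image of $\tilde\nu$ under the inclusion $\sA^{n-1,0}_{\sX/\Delta\times S}/\text{Tors}\hookrightarrow \sA^{n,0}_{\sX/\Delta}(\varphi^*\omega_S^\vee)$, and that on each fiber $X_t$ this inclusion is exactly the identification $E_t\hookrightarrow F_t$ coming from (\ref{senzators}) which underlies the comparison of norms in Proposition \ref{normeuguali}.

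The core of the argument is a local computation. First I would work on an open subset where $\varphi$ is smooth and choose coordinates $x_1,\ldots,x_{n-1},s,t$ in which $\varphi$ is projection on $s$ and $f$ is projection on $t$. Writing $\tu$ in the form of Remark \ref{ex}, we have $\btu=\rho(\tu)=\tu\wedge ds\otimes\tfrac{\partial}{\partial s}$, so
\[
\bar\partial\btu=(\bar\partial\tu)\wedge ds\otimes\tfrac{\partial}{\partial s}.
\]
Any component of $\bar\partial\tu$ containing $d\bar s$ is killed after wedging with $ds$, and what remains on the right is exactly $\bar\partial^0_{\sX/S}\tu$ wedged with $ds\otimes\tfrac{\partial}{\partial s}$, since on this smooth locus the torsion quotient is trivial and $\bar\partial_{\sX/S}$ differs from $\bar\partial$ only by the $d\bar s$-component. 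Extracting the $d\bar t$-parts on both sides yields $\nu=\rho(\tilde\nu)$ on this open set. Around the critical locus of $G$ the same identity must be checked modulo torsion: using the semistable normal form for $G$ provided by the lemma in Section \ref{sez3}, one expands $\tu$ and $\btu$ in the adapted coordinates, observes that the wedge with the local equation of the $G$-discriminant again annihilates the $d\bar s$-type contributions, and passes to the torsion-free quotient $\sA^{n-1,0}_{\sX/\Delta\times S}/\text{Tors}$, in which the comparison becomes unambiguous.

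Once $\nu=\rho(\tilde\nu)$ is established globally, restriction to the fiber gives $\nu_t$ as the image of $\tilde\nu_t$ under $E_t\hookrightarrow F_t$. Since the Chern connection of $E$ is the one induced from $F$ via this embedding, Proposition \ref{primaver} immediately rewrites as $(D''_Eu)_t=\tilde\nu_t\,d\bar t$. The main subtlety in the plan is the bookkeeping near the critical locus of $G$: there $\Omega^{n-1}_{\sX/\Delta\times S}$ is in general not locally free and $\varphi$ need not even be semistable (cf.\ Example \ref{example1}), so one has to verify that the equality $\nu=\rho(\tilde\nu)$ still holds after passing to the torsion-free quotient. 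This is precisely the reason the statement is formulated with $\bar\partial^0_{\sX/S}$ rather than with $\bar\partial_{\sX/S}$.
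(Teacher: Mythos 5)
Your overall strategy is exactly the paper's: Proposition \ref{eta} is obtained from Proposition \ref{primaver} by the observation that $\nu$ is the image of $\tilde{\nu}$ under the wedge-product inclusion $\sA^{n-1,0}_{\sX/\Delta\times S}/\text{Tors}\hookrightarrow \sA^{n,0}_{\sX/\Delta}(\varphi^*\omega_S^\vee)$, which the paper asserts via an unwritten local computation and which you carry out in coordinates. So the route is the same and the conclusion is reached correctly.

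One step of your local computation is, however, wrong as stated: you claim that ``any component of $\bar\partial\tu$ containing $d\bar s$ is killed after wedging with $ds$.'' Since $ds$ is a $(1,0)$-form, $d\bar s\wedge ds\neq 0$, so wedging with $ds\otimes\tfrac{\partial}{\partial s}$ does \emph{not} annihilate the $d\bar s$-type terms (it only kills the $ds$-type terms). The correct justification is different: writing $\tu=a\,dx_1\wedge\dots\wedge dx_{n-1}+\sum_i b_i\,dx_{\hat{i}}\wedge dt$ as in Remark \ref{ex}, the offending contribution to $\bar\partial\btu$ is $\tfrac{\partial a}{\partial\bar s}\,d\bar s\wedge dx_1\wedge\dots\wedge dx_{n-1}\wedge ds\otimes\tfrac{\partial}{\partial s}$, and this vanishes because $\btu$ restricts to the holomorphic form $u_t$ on every fiber $X_t$ (so $\partial a/\partial\bar s=\partial a/\partial\bar x_j=0$ identically); alternatively, for the purposes of this proposition one only needs the coefficient of $d\bar t$, to which the $d\bar s$-components contribute nothing. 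With this repair the identification $\nu_t=\rho(\tilde\nu_t)$ and hence the formula $(D''_Eu)_t=\tilde\nu_t\,d\bar t$ follow as you say. The same correction applies to your discussion near the critical locus of $G$, which in any case can be sidestepped: the equality of the fiber restrictions needs only to be checked on the dense open set where $G$ is smooth, since both sides are sections of torsion-free sheaves on $X_t$.
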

For the holomorphic derivative, we have that ${\partial}_{\sX/S}\tu$ is a smooth section in $\sA^{n,0}_{\sX/S}$ and ${\partial}\btu$ in $\sA^{n+1,0}_{\sX}(\varphi^*\omega^\vee_S)$. They are related via the morphism
$$
\rho_{n}\colon \sA^{n,0}_{\sX/S}\to\sA^{n+1,0}_{\sX}(\varphi^*\omega^\vee_S)
$$ that is $\rho_{n}(\partial_{\sX/S}\tu)=\partial \btu$. This can be checked, for example, by local computation. 
%
However, while we can always write 
$$
\partial \btu=\mu\wedge dt
$$ as in Proposition \ref{primaver}, in general it is not true that we can express $\partial^0_{\sX/S}\tu$ as
\begin{equation*}
\partial^0_{\sX/S}\tu=\tilde{\mu}\wedge dt
\end{equation*} where $\tilde{\mu}$ is a smooth relative $(n-1)$-form for the fibration $G\colon \sX\to \Delta\times S$.  

This is true however in some cases, for example if $G$ is smooth or $G$ is locally as in Case (3) of Example \ref{example1}. We refer to these cases simply by saying that $\varphi$ is semistable with separated variables, since we can choose coordinates such that $t$ does not appear in the local expression of $\varphi$. In this case in fact we have the following lemma.
\begin{lem}
\label{isotors}
If $\varphi$ is semistable with separated variables, we have an isomorphism $\Omega^{n}_{\sX/S}/\textnormal{Tors}\cong (\Omega^{n-1}_{\sX/\Delta\times S}/\textnormal{Tors})\otimes f^*\omega_{\Delta}$.
\end{lem}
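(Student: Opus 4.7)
The plan is to construct a natural wedge-product morphism
$$\Psi\colon \Omega^{n-1}_{\sX/\Delta\times S}\otimes f^*\omega_\Delta \longrightarrow \Omega^n_{\sX/S},$$
pass it to the torsion-free quotients, and verify by a local computation in separated-variables coordinates that the induced map is an isomorphism.

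First I would construct $\Psi$ starting from the short exact sequence (\ref{1forme}). Locally, given $\alpha\otimes dt$ with $\alpha\in \Omega^{n-1}_{\sX/\Delta\times S}$, choose any lift $\tilde\alpha\in\Omega^{n-1}_{\sX/S}$ and set $\Psi(\alpha\otimes dt):=\tilde\alpha\wedge dt$; the ambiguity of the lift lies in $f^*\omega_\Delta\cdot \Omega^{n-2}_{\sX/S}$ by the wedge-filtration of (\ref{1forme}), and is annihilated upon wedging once more with $dt$. Since $\Psi$ sends torsion to torsion, it descends to
$$\bar\Psi\colon (\Omega^{n-1}_{\sX/\Delta\times S}/\textnormal{Tors})\otimes f^*\omega_\Delta \longrightarrow \Omega^n_{\sX/S}/\textnormal{Tors}.$$
Both source and target are rank-$1$ torsion-free: the source embeds into $\omega_{\sX/\Delta\times S}\otimes f^*\omega_\Delta$ via (\ref{senzators}), and the target embeds into $\omega_{\sX/S}=\Omega^{n}_{\sX/S}(\log)$ by the analogous inclusion. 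Since $\omega_{\sX/S}=\omega_\sX\otimes\varphi^*\omega_S^\vee\cong \omega_{\sX/\Delta\times S}\otimes f^*\omega_\Delta$ tautologically, both sheaves sit inside the \emph{same} line bundle $\omega_{\sX/S}$, and it will suffice to check that their images there coincide.

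Next I would perform the local computation. The separated-variables hypothesis provides coordinates $x_1,\dots,x_{n+1}$ in which $f=x_{n+1}$ and $\varphi=x_1\cdots x_p$, with $x_{n+1}$ not appearing in $\varphi$. Set $\sigma_k:=x_1\cdots \hat x_k\cdots x_p$ and choose log-top generators $\Omega_{\Delta S}$, $\Omega_S$ of $\omega_{\sX/\Delta\times S}$, $\omega_{\sX/S}$ so that $\Omega_S=\Omega_{\Delta S}\wedge dt$. Using the log relation $\sum_{i=1}^{p}dx_i/x_i=0$, a direct calculation shows that the basic $(n-1)$-forms $\omega'_k:=\pm dx_1\wedge\cdots\widehat{dx_k}\cdots\wedge dx_n$ equal $\pm\sigma_k\,\Omega_{\Delta S}$ in $\omega_{\sX/\Delta\times S}$ for $k\le p$, and vanish there for $p<k\le n$ (because they contain the factor $dx_1\wedge\cdots\wedge dx_p$, which is zero in $\Omega^{p}_{\sX/\Delta\times S}(\log)$). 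The parallel statement for $\Omega^n_{\sX/S}$ gives $\omega_k:=\pm dx_1\wedge\cdots\widehat{dx_k}\cdots\wedge dx_n\wedge dx_{n+1}=\pm\sigma_k\,\Omega_S$ for $k\le p$, vanishes for $p<k\le n$, and $\omega_{n+1}=\pm dx_1\wedge\cdots\wedge dx_n$ is torsion (killed by each $\sigma_j$). Thus both images coincide with the ideal $(\sigma_1,\dots,\sigma_p)\cdot \Omega_S$ in $\omega_{\sX/S}$, so $\bar\Psi$ is an isomorphism.

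The key point, and the reason the separated-variables assumption cannot be dropped, is precisely the vanishing of $\omega'_k$ for $k>p$ and the matching torsion behaviour of $\omega_k$: these rely on the defining relation of $\Omega^1_{\sX/\Delta\times S}$ involving only $dx_1,\dots,dx_p$ with no $dt$ contribution. In the general semistable case that relation acquires a term in $dt$, the torsion analyses on the two sides diverge, and the identification in the lemma can genuinely fail.
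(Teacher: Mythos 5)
Your proof is correct and follows essentially the same route as the paper: the same wedge-product morphism, reduced to the same local computation in separated-variables coordinates with the generators $dx_1\wedge\cdots\wedge\widehat{dx_k}\wedge\cdots\wedge dx_n$ (and the same observation that $dx_1\wedge\cdots\wedge dx_n$ is torsion on the $\Omega^n_{\sX/S}$ side). The only cosmetic difference is that you establish bijectivity by embedding both sides into $\omega_{\sX/S}$ and matching the images with the ideal $(\sigma_1,\dots,\sigma_p)$, whereas the paper gets injectivity abstractly from both sheaves being rank-one and torsion-free and then checks surjectivity on generators.
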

\begin{proof}
Consider the morphism given by wedge product
$$
(\Omega^{n-1}_{\sX/\Delta\times S}/\textnormal{Tors})\otimes f^*\omega_{\Delta}\to \Omega^{n}_{\sX/S}/\textnormal{Tors}
$$ Note that these sheaves are generically of rank one and torsion free, hence this morphism is injective. We just need to show the surjectivity.

We show this locally around the critical points, since it is trivial where $G$ is smooth. Take local coordinates $x_1,\dots,x_{n},t$ on $\sX$ such that $\varphi$ is given locally by  $x_1,\dots,x_{n},t\mapsto x_1\cdots x_p$ since we are in Case (3) of Example \ref{example1}.

The sheaf $\Omega^{n}_{\sX/S}/\textnormal{Tors}$ is locally generated by the wedge products
$$
dx_1\wedge\dots\wedge\widehat{dx_i}\wedge\dots\wedge dx_p\wedge dx_{p+1}\wedge\dots\wedge dx_n\wedge dt,
$$ $i=1,\dots,p$, which are related by the relation coming from $d(x_1\cdots x_p)=0$. It is easy to see that these are in the image of  $(\Omega^{n-1}_{\sX/\Delta\times S}/\textnormal{Tors})\otimes f^*\omega_{\Delta}$ and this shows the required surjectivity.

As a closing note, the torsion of $\Omega^{n}_{\sX/S}$ is given by the element
$$
dx_1\wedge\dots\wedge dx_n
$$ which is not zero but vanishes when multiplied (for example) by $x_1\cdots x_{p-1}$. This element is not in the image of 
$$
(\Omega^{n-1}_{\sX/\Delta\times S}/\textnormal{Tors})\otimes f^*\omega_{\Delta}\to \Omega^{n}_{\sX/S}
$$ and not even of 
$$
\Omega^{n-1}_{\sX/\Delta\times S}\otimes f^*\omega_{\Delta}\to \Omega^{n}_{\sX/S}
$$ and this shows that taking the quotient by the torsion part is necessary.
\end{proof}

Hence in the semistable case we have the following formula for the $(1,0)$-part of the Chern connection.
\begin{prop}
\label{propanti}
If $\varphi$ is semistable with separated variables, the $(1,0)$-part of the Chern connection is given by the formula $(D'_Eu)_t=P'(\tilde{\mu}_t)dt$ where $\partial^0_{\sX/S}\tu=\tilde{\mu}\wedge dt$ and $P'$ is the orthogonal projection  onto $E_t$. 
\end{prop}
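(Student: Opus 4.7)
The starting point is Proposition \ref{primaver}, which already expresses $(D'_Eu)_t = P'(\mu_t)dt$ with $\partial\btu = \mu\wedge dt$. So the task reduces to showing that, under the semistable separated-variables hypothesis, $\mu_t$ and $\tilde{\mu}_t$ represent the \emph{same} element of $E_t$ via the natural identifications, which will force $P'(\mu_t)=P'(\tilde{\mu}_t)$ and yield the statement.

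The plan is to exploit the identity $\rho_n(\partial_{\sX/S}\tu) = \partial\btu$ stated in the paragraph preceding Lemma \ref{isotors}. In the present setting, Lemma \ref{isotors} provides the isomorphism
$$\Omega^n_{\sX/S}/\text{Tors}\,\cong\,(\Omega^{n-1}_{\sX/\Delta\times S}/\text{Tors})\otimes f^*\omega_\Delta$$
given by wedge with $dt$, so $\partial^0_{\sX/S}\tu$ can be written uniquely as $\tilde{\mu}\wedge dt$ with $\tilde{\mu}$ a section of $\sA^{n-1,0}_{\sX/\Delta\times S}/\text{Tors}$. Because the target of $\rho_n$ is locally free, $\rho_n$ annihilates torsion and therefore descends to $\sA^{n,0}_{\sX/S}/\text{Tors}$; applied to $\tilde{\mu}\wedge dt$ this gives $\rho_n(\tilde{\mu}\wedge dt)=\partial\btu=\mu\wedge dt$. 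A direct local computation, using that $\rho_i$ is wedge with a generator of $\varphi^*\omega_S$ tensored with its dual, shows that $\rho_n(\tilde{\mu}\wedge dt)$ agrees with $\rho_{n-1}(\tilde{\mu})\wedge dt$ up to a uniform sign (absorbed, as elsewhere in the paper, into the convention for the coefficient of $dt$). Hence $\mu = \rho_{n-1}(\tilde{\mu})$ modulo forms containing $dt$.

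Restricting to a fibre $X_t$ (where $dt$ vanishes) yields $\mu_t = \rho_{n-1}|_{X_t}(\tilde{\mu}_t)$, and $\rho_{n-1}|_{X_t}$ is exactly the embedding $\Omega^{n-1}_{X_t/S}/\text{Tors}\hookrightarrow\omega_{X_t/S}$ of (\ref{senzators}) that realizes the inclusion $E_t\hookrightarrow F_t$ described in (\ref{Efiber}). By Proposition \ref{normeuguali} the two metrics on $E_t$ coincide up to a factor, so the orthogonal projection onto $E_t$ commutes with this identification; thus $P'(\mu_t)=P'(\tilde{\mu}_t)$ and Proposition \ref{primaver} gives the claimed formula. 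The main technical subtlety is verifying the factorization of $\rho_n$ through the torsion-free quotient and checking the local model around the critical locus of $\varphi$, where $ds = d(x_1\cdots x_p)$ vanishes and torsion phenomena could a priori spoil the identification; but in the separated-variables situation the explicit coordinate description underlying Lemma \ref{isotors} makes this check routine.
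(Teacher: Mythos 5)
Your proposal is correct and follows essentially the same route as the paper: invoke Lemma \ref{isotors} to write $\partial^0_{\sX/S}\tu=\tilde{\mu}\wedge dt$, check (locally) that $\mu$ is the image of $\tilde{\mu}$ under the wedge-product embedding $\sA^{n-1,0}_{\sX/\Delta\times S}/\textnormal{Tors}\hookrightarrow \sA^{n,0}_{\sX/\Delta}(\varphi^*\omega^\vee_S)$, and conclude via Proposition \ref{primaver}. You merely spell out the factorization through the torsion-free quotient and the compatibility of the projections, which the paper leaves as ``it is not difficult to see.''
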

\begin{proof}
Thanks to the isomorphism of Lemma \ref{isotors} in the case of smooth sections, we can write $\partial^0_{\sX/S}\tu$ as 
\begin{equation*}
\partial^0_{\sX/S}\tu=\tilde{\mu}\wedge dt
\end{equation*} where $\tilde{\mu}$ is a smooth section in $\sA^{n-1,0}_{X/\Delta\times S}/\textnormal{Tors}$.

On the other hand, writing 
$$
\partial\btu=\mu\wedge dt
$$ as in Proposition \ref{primaver}, it is not difficult to see that  $\mu$ is the image of $\tilde{\mu}$  via the morphism 
$$
 \sA^{n-1,0}_{\sX/\Delta\times S}/\textnormal{Tors}\hookrightarrow \sA^{n,0}_{\sX/\Delta}( g^*\omega^\vee_S).
$$


Comparing with Proposition \ref{primaver} we have the result.
\end{proof}

We finally have the following curvature formula on $E$ in terms of $\tu$ and its relative differentials.  This is Theorem \ref{thm1} of the Introduction.

\begin{thm}\label{curv2}
 Take $u$ a section of $E$ and write as before $\partial\btu=\mu\wedge dt$ and $\bar{\partial}^0_{\sX/S}\tu=\tilde{\nu}\wedge d\bar{t}+\tilde{\eta}\wedge dt$. Then the curvature of $E$ on $u$ is given at $t$ by 
 \begin{equation}\label{curvrel1}
 	\langle\Theta_E u_t,u_t \rangle_t=-\lVert P'_\perp{\mu_t}\lVert^2_t-2c_{n-1}\int_{X_t}\tilde{\eta}_t\wedge\overline{\tilde{\eta}_t}\wedge g^*_t\textnormal{Vol}S
 \end{equation}  where $P'_\perp$ is the orthogonal projection on the orthogonal complement of $E_t$.
   If $\varphi$ is semistable with separated variables, we can also write  $\partial^0_{\sX/S}\tu=\tilde{\mu}\wedge dt$ and the curvature becomes
\begin{equation}
\langle\Theta_E u_t,u_t \rangle_t=-\lVert P'_\perp\tilde{\mu}_t\lVert^2_t-2c_{n-1}\int_{X_t}\tilde{\eta}_t\wedge\overline{\tilde{\eta}_t}\wedge g^*_t\textnormal{Vol}S.
\end{equation}
\end{thm}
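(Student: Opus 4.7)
The starting point is Theorem \ref{curv1}, which already gives $\langle\Theta_E u_t,u_t\rangle_t=-\lVert P'_\perp\mu_t\rVert^2_t-c_n\int_{X_t}\eta_t\wedge\bar\eta_t\,e^{-\phi}$. The first summand is already what appears in the target formula, so the plan is to rewrite \emph{only} the integral summand: I would show the pointwise identity
\[
c_n\,\eta_t\wedge \bar\eta_t\, e^{-\phi}\;=\;2c_{n-1}\,\tilde\eta_t\wedge\overline{\tilde\eta_t}\wedge g_t^*\mathrm{Vol}\,S,
\]
and then integrate. For the second formula (under the separated variables hypothesis) the plan is to invoke Proposition \ref{propanti} to swap $\mu_t$ for $\tilde\mu_t$, using Proposition \ref{normeuguali} to see that the norm and the projection $P'_\perp$ are unchanged.

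For the pointwise identity I would work locally. Pick a smooth extension $\hat\tu\in\sA^{n-1,0}_{\sX}$ of the relative form $\tu$; then, because $\rho$ is wedge with a local holomorphic frame of $\varphi^*\omega_S$ tensored with its dual (Lemma \ref{wedge}), one has $\btu=\hat\tu\wedge ds\otimes\partial/\partial s$ and therefore $\bar\partial\btu=(\bar\partial\hat\tu)\wedge ds\otimes\partial/\partial s$. Using that fiberwise holomorphicity of $u_t$ kills the $d\bar x_j$- and $d\bar s$-derivatives of the ``leading'' coefficient $a$ in $\hat\tu$, the $dt$-component of $\bar\partial\btu$ decomposes as
\[
\eta\;=\;\pm\,\tilde\eta\wedge ds\otimes\partial/\partial s\;+\;\eta^{\mathrm{extra}},
\]
where $\tilde\eta$ is exactly the $dt$-component of $\bar\partial^0_{\sX/S}\tu$ (it comes from the $\partial b_i/\partial \bar x_j$ terms), while $\eta^{\mathrm{extra}}$ collects the $\partial b_i/\partial \bar s$ terms and carries an explicit factor of $d\bar s$ that is killed by $\bar\partial^0_{\sX/S}$ but not by the ambient $\bar\partial$. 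Near a critical point of $G$ the same analysis works using the semistable local form (Example \ref{example1}) with $x_1\cdots x_p$ in place of $s$; the essential coordinate-free fact is $\btu\wedge\varphi^*ds=0$ from Lemma \ref{wedge}.

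Next I would observe that the ``extra'' piece $\eta^{\mathrm{extra}}$ contributes nothing to $\eta\wedge\bar\eta$: its three possible cross- and self-contractions all force either $ds\wedge ds$ or $d\bar s\wedge d\bar s$ (since both $\tilde\eta\wedge ds\otimes\partial/\partial s$ and $\eta^{\mathrm{extra}}$ already carry a $ds$, and $\eta^{\mathrm{extra}}$ additionally carries a $d\bar s$). Consequently
\[
\eta_t\wedge\bar\eta_t\,e^{-\phi}=\tilde\eta_t\wedge ds\wedge\overline{\tilde\eta_t}\wedge d\bar s\cdot\alpha,
\]
with $\alpha=e^{-\phi}$ the local weight of $\varphi^*\omega_S^\vee$. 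The identity above now follows exactly as in Proposition \ref{normeuguali}: use $g_t^*\mathrm{Vol}\,S=\tfrac{i}{2}\alpha\,ds\wedge d\bar s$, the transposition $\overline{\tilde\eta_t}\wedge ds=(-1)^{n-1}ds\wedge\overline{\tilde\eta_t}$, and $c_n=c_{n-1}\cdot i\cdot(-1)^{n-1}$, which together produce exactly the factor of $2$. Integration over $X_t$ finishes the first formula.

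For the second formula I would apply Proposition \ref{propanti}: in the semistable separated variables case $\partial^0_{\sX/S}\tu=\tilde\mu\wedge dt$ and $\mu$ is the image of $\tilde\mu$ under the wedge embedding $\sA^{n-1,0}_{\sX/\Delta\times S}/\mathrm{Tors}\hookrightarrow \sA^{n,0}_{\sX/\Delta}(\varphi^*\omega_S^\vee)$. On each fiber this embedding is precisely the isomorphism appearing in Proposition \ref{normeuguali}, so it preserves the norm $\lVert\cdot\rVert_t$ (which was defined to absorb the factor of $2$) and is compatible with the orthogonal projection onto the complement of $E_t$, giving $\lVert P'_\perp\mu_t\rVert_t=\lVert P'_\perp\tilde\mu_t\rVert_t$; substitution into the first formula yields the second. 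The main obstacle in the whole argument is the bookkeeping in the second paragraph: one must be sure that the $d\bar s$ nuisance terms really do vanish in $\eta\wedge\bar\eta$ after wedging, and that the analysis survives at the critical points of $G$, where $\varphi$ need not even be semistable.
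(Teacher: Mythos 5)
Your proposal is correct and follows essentially the same route as the paper: it starts from Theorem \ref{curv1}, establishes the pointwise identity $c_n\eta_t\wedge\bar\eta_t e^{-\phi}=2c_{n-1}\tilde\eta_t\wedge\overline{\tilde\eta_t}\wedge g_t^*\mathrm{Vol}\,S$ by a local computation in the spirit of Proposition \ref{normeuguali}, and invokes Proposition \ref{propanti} for the semistable separated-variables case. The only difference is that you spell out the bookkeeping (the $d\bar s$ cross-terms dying in $\eta\wedge\bar\eta$) that the paper leaves as ``it is not difficult to see.''
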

\begin{proof}
We use the formula of Theorem \ref{curv1}:
\begin{equation*}
	\langle\Theta_E u_t,u_t \rangle_t=-\lVert P'_\perp{\mu_t}\lVert^2_t-c_n\int_{X_t}\eta_t\wedge\bar{\eta}_te^{-\phi}
\end{equation*}

 We have already discussed in Proposition \ref{propanti} the relation between $\mu$ and $\tilde{\mu}$ when $\varphi$ is semistable, so we just need to look at the second summand.
 
 For this, it is not difficult to see, with local computations similar to those of Proposition \ref{normeuguali},  that on $X_t$ 
 $$
 c_n\eta_t\wedge\bar{\eta}_te^{-\phi}=2c_{n-1}\tilde{\eta}_t\wedge\overline{\tilde{\eta}_t}\wedge g^*_t\textnormal{Vol}S.
 $$
 \end{proof}

\section{Curvature and deformations}
 \label{sez5}
 
In this section we relate the curvature formulas from Section \ref{sec4} to the theory of deformation of maps with fixed target. We also identify a subbundle of $E$ with seminegative curvature. 

We assume for simplicity that $n=2$, that is $\sX$ has dimension 3 and each $g_t\colon X_t\to S$ is a fibred surface over a fixed curve.
Most of the results generalise to the higher dimensional general case, however the discussion becomes technically more complicated without giving substantial improvement.

 As a first technical simplification for example, since the sheaves of relative 1-forms are torsion free for semistable fibrations, the vector bundle $E$ is just associated  to $f_*(\Omega^{1}_{\sX/\Delta\times S})$ and its fiber is $E_t=\Ima(H^0(X_t,\Omega^{1}_{X_t/S})\to H^0(X_t,\omega_{X_t/S}))$; comparing with Definition (\ref{notationE}) and Formula (\ref{Efiber}) we see that we do not need to remove torsion.

As discussed in Section \ref{sec2}, the deformation of maps with fixed target given by $f\colon \sX\to \Delta\times S\to \Delta$ defines elements $\zeta_t\in \Ext^1(\Omega^1_{X_t/S},\sO_{X_t})$, for $t\in \Delta$. We denote as before by $\xi_t$ the image of $\zeta_t$ in $H^1(X_t,T_{X_t})=\Ext^1(\Omega^1_{X_t},\sO_{X_t})$ via the homomorphism (\ref{defmap}), so that $\xi_t$ is the usual Kodaira-Spencer class of the deformation of $X_t$ induced by $f$.
Since $\zeta_t$ and $\xi_t$ are elements of extension groups, their interpretation as short exact sequences is as follows.

First, $\zeta_t\in \Ext^1(\Omega^1_{X_t/S},\sO_{X_t})$ corresponds to the exact sequence
\begin{equation}\label{zeta}
	0\to \sO_{X_t}\overset{dt}{\underset{}{\to}} \Omega^1_{\sX/S|X_t}\to \Omega^1_{X_t/S}\to 0
\end{equation} which is the restriction to $X_t$ of (\ref{1forme}). On the other hand,  $\xi_t\in \Ext^1(\Omega^1_{X_t},\sO_{X_t})$ corresponds to 
\begin{equation}\label{xi}
	0\to \sO_{X_t}\overset{dt}{\underset{}{\to}} \Omega^1_{\sX|X_t}\to \Omega^1_{X_t}\to 0.
\end{equation} Actually, since $\Ext^1(\Omega^1_{X_t},\sO_{X_t})\simeq \Ext^1(\omega_{X_t}\otimes g_t^*\omega_S^\vee,\Omega^1_{X_t}\otimes g_t^*\omega_S^\vee)$, $\xi_t$ also corresponds to the exact sequence obtained by the second wedge product of (\ref{xi}) followed by tensor product with the line bundle $g_t^*\omega_S^\vee$:
\begin{equation}\label{xi2}
	0\to \Omega^1_{X_t}\otimes g_t^*\omega_S^\vee \to \Omega^2_{\sX|X_t}\otimes g_t^*\omega_S^\vee\to \omega_{X_t}\otimes g_t^*\omega_S^\vee\to 0.
\end{equation}
Sequences (\ref{zeta}) and (\ref{xi2}) fit into the commutative diagram
\begin{equation}\label{diag}
	\xymatrix{
		0\ar[r]& \Omega^1_{X_t}\otimes g_t^*\omega_S^\vee \ar[r]&  \Omega^2_{\sX|X_t}\otimes g_t^*\omega_S^\vee\ar[r]&  \omega_{X_t}\otimes g_t^*\omega_S^\vee\ar[r]&  0\\
		0\ar[r]& \sO_{X_t}\ar[r]\ar[u]&  \Omega^1_{\sX/S|X_t}\ar[r]\ar[u]& \Omega^1_{X_t/S}\ar[r]\ar[u]&0
	}
\end{equation}
 which is the restriction of Diagram (\ref{diagrammaconfronto}) to the fiber $X_t$ in our case (i.e. $n=2$).
 We denote by $\xi_t\cup$ the cup product with the class $\xi_t$, which gives the coboundary map for the cohomology long exact sequence of the top row of Diagram (\ref{diag}). In the same fashion $\zeta_t\cup$ will denote the coboundary map for the cohomology sequence of the bottom row.

In this setting we analyse in details the relation between the relative forms $\eta$ and $\tilde{\eta}$ defined in (\ref{etauno}) and (\ref{etatilde}) respectively.

By \cite{bern1,bern2}, we know that $\eta_t$ defines the cohomology class of the cup product  $\xi_t\cup u_t\in H^{1,1}(X_t,g^*_t\omega_S^\vee)$, hence we show the connection between $\tilde{\eta}_t$ and the class $\zeta_t$ parametrising the deformation of maps with fixed target. The first step is the following result which is immediate from the commutativity of Diagram (\ref{diag})
 \begin{prop}\label{firstprop}
Take a section $u$ of $E$. The image of the cohomology class $\zeta_t\cup u_t\in H^1(X_t,\sO_{X_t})$ via the homomorphism $H^1(X_t,\sO_{X_t})\to  H^1(X_t,\Omega^1_{X_t}\otimes g_t^*\omega_S^\vee)=H^{1,1}(X_t,g^*_t\omega_S^\vee)$ is the class $\xi_t\cup u_t=[\eta_t]$.
\end{prop}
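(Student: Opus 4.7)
The plan is to read the statement as an instance of naturality of the coboundary (connecting) homomorphism in a morphism of short exact sequences, and then invoke the Berndtsson identification $\xi_t\cup u_t=[\eta_t]$ already cited from \cite{bern1,bern2}.

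First, I would recall the Yoneda/cup product interpretation of the two extension classes. By construction, the bottom row of Diagram \eqref{diag} is an $\sO_{X_t}$-extension representing $\zeta_t\in \Ext^1(\Omega^1_{X_t/S},\sO_{X_t})$, and the associated long exact sequence in cohomology produces a connecting map
\[
H^0(X_t,\Omega^1_{X_t/S})\longrightarrow H^1(X_t,\sO_{X_t})
\]
which is exactly the cup product with $\zeta_t$. Similarly, the top row represents $\xi_t$ after the canonical identification $\Ext^1(\Omega^1_{X_t},\sO_{X_t})\simeq \Ext^1(\omega_{X_t}\otimes g_t^*\omega_S^\vee,\Omega^1_{X_t}\otimes g_t^*\omega_S^\vee)$, so its coboundary is cup product with $\xi_t$.

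Next, I would invoke naturality of the connecting homomorphism for a morphism of short exact sequences of sheaves: applying cohomology to Diagram \eqref{diag} yields the commutative ladder
\[
\xymatrix{
H^0(X_t,\Omega^1_{X_t/S})\ar[r]^-{\zeta_t\cup}\ar[d]&H^1(X_t,\sO_{X_t})\ar[d]\\
H^0(X_t,\omega_{X_t}\otimes g_t^*\omega_S^\vee)\ar[r]^-{\xi_t\cup}&H^1(X_t,\Omega^1_{X_t}\otimes g_t^*\omega_S^\vee).
}
\]
Now the section $u_t\in E_t$ has the two parallel interpretations of Remark \ref{keyrmk}: as a global section of $\Omega^1_{X_t/S}$ (the top-left corner) and as a global section of $\omega_{X_t}\otimes g_t^*\omega_S^\vee$ (the bottom-left corner), and the left vertical arrow is precisely the one relating these two interpretations (it is the restriction to $X_t$ of the rightmost vertical arrow in Diagram \eqref{diagrammaconfronto}, i.e.\ wedge with $ds$). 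Chasing $u_t$ around the square gives
\[
\text{(right vertical)}(\zeta_t\cup u_t)=\xi_t\cup u_t,
\]
which is the first equality of the proposition.

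Finally, the equality $\xi_t\cup u_t=[\eta_t]$ is exactly \cite[Lemma 2.2]{bern2} applied to the representative $\btu$ chosen in Section \ref{sec4}: the $(n-1,1)$-part $\eta_t$ of $\bar\partial\btu$ along $dt$ is a Dolbeault representative of the cup product of $\xi_t$ with $u_t$, which we have already recalled. There is no real obstacle here; the only subtlety to get right is to verify that the left vertical identification in the ladder above genuinely sends the first interpretation of $u_t$ to the second — a point that is geometrically obvious from Remark \ref{remarkloc} but worth spelling out to make the diagram chase unambiguous.
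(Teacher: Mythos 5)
Your proposal is correct and is essentially the paper's argument: the paper dismisses the first equality as ``immediate from the commutativity of Diagram (\ref{diag})'', which is exactly the naturality-of-coboundary ladder you chase, and the identification $\xi_t\cup u_t=[\eta_t]$ is likewise quoted from \cite{bern1,bern2} just before the statement. You have merely made explicit the diagram chase and the verification that the left vertical arrow matches the two interpretations of $u_t$ from Remark \ref{keyrmk}, which the paper leaves implicit.
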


So in particular, the cohomology class $[\eta_t]$ is in the image of $H^1(X_t,\sO_{X_t})\to  H^{1,1}(X_t,g^*_t\omega_S^\vee)$.
When $\eta_t$ is obtained from the representative $\btu$ as in (\ref{etauno}), we can read this also at the level of sections and not only of cohomology classes. In fact $\eta_t$ is a smooth section in $\sA^{1,1}_{X_t}(g_t^*\omega_S^\vee)$ and there is an injective morphism with torsion free quotient
$$
\sA^{0,1}_{X_t}\hookrightarrow\sA^{1,1}_{X_t}(g_t^*\omega_S^\vee)
$$ given by wedge product. It is not difficult to see that $\eta_t$ locally on the good open set where $g_t$ is smooth is in the image of this morphism, hence it is globally in the image since the quotient has no torsion.  We stress that this relies on the fact that the representative $\btu$ has the particular form shown in Notation \ref{nota}. We denote by $\hat{\eta}_t$ the section of $\sA^{0,1}_{X_t}$ which maps to $\eta_t$. Note that by Proposition \ref{firstprop}, $\zeta_t\cup u_t=[\hat{\eta}_t]$.

On the other hand, recalling the definition of $\tilde{\eta}$ from (\ref{etatilde}), $\tilde{\eta}_t$ is a smooth section in $\sA^{0,1}_{X_t/S}$. There is no direct morphism between $\sA^{1,1}_{X_t}(g_t^*\omega_S^\vee)$ and $\sA^{0,1}_{X_t/S}$, nevertheless they are related thanks to $\sA^{0,1}_{X_t}$ as follows
\begin{equation}\label{diagsh}
	\xymatrix{
	\sA^{0,1}_{X_t}\ar@{^{(}->}[r]\ar[rd]&\sA^{1,1}_{X_t}(g_t^*\omega_S^\vee)\\
	&\sA^{0,1}_{X_t/S}
	}
\end{equation} where the diagonal arrow is the quotient with kernel the smooth (0,1)-forms coming from $S$.

By local computation, it is not difficult to see that the image of $\hat{\eta}_t$ in $\sA^{0,1}_{X_t/S}$ is exactly  $\tilde{\eta}_t$; that is
\begin{equation*}
	\xymatrix{
	\hat{\eta}_t	\ar@{|->}[r]\ar@{|->}[rd]&\eta_t\\
		&\tilde{\eta}_t
	}
\end{equation*} 

Now $u_t\in E_t$ can be seen as a relative 1-form for the morphism $g_t$ (this is the second interpretation from Remark \ref{keyrmk}); hence $u_t$ associates to a general $s\in S$ a holomorphic 1-form on the fiber $X_{t,s}:=g_t^{-1}(s)$, we denote this form by  $u_{t,s}\in H^0(X_{t,s}, \Omega^1_{X_{t,s}})$. 
Similarly, $\tilde{\eta}_t$ is a relative $(0,1)$-form for the morphism $g_t$, so we denote by $\tilde{\eta}_{t,s}$ the $(0,1)$-form on the fiber $X_{t,s}$ given by $\tilde{\eta}_t$.

The above discussion gives the following result which finally relates $\tilde{\eta}_t$ with the deformation of maps with fixed target.
\begin{prop}
The form $\tilde{\eta}_t$ defines a global holomorphic section of  $R^1{g_t}_*\sO_{X_t}$. This section associates to the general $s\in S$ the class $\phi_s(\zeta_t)\cup u_{t,s}$, where $\phi_s$ is as in (\ref{phi}). In particular $[\tilde{\eta}_{t,s}]=\phi_s(\zeta_t)\cup u_{t,s}$ as elements in $H^1(X_{t,s},\sO_{X_{t,s}})$.
\end{prop}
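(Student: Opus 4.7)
The plan is to split the statement into two parts: first, that $\tilde\eta_t$ determines a global holomorphic section of $R^1 g_{t*}\sO_{X_t}$; and second, that the value of this section at a generic $s\in S$ equals $\phi_s(\zeta_t)\cup u_{t,s}$. The key already-available input is the identity $[\hat\eta_t]=\zeta_t\cup u_t$ in $H^1(X_t,\sO_{X_t})$ recorded after Proposition \ref{firstprop}, together with naturality of the cup product under restriction to a fiber of $g_t$.

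For the first part, the assumption $n=2$ makes each $X_{t,s}$ a compact Riemann surface, so every smooth $(0,1)$-form on it is automatically $\bar\partial$-closed and hence defines a class in $H^1(X_{t,s},\sO_{X_{t,s}})$. To organize these pointwise classes into a global holomorphic section of $R^1 g_{t*}\sO_{X_t}$, I would apply the Leray edge map $H^1(X_t,\sO_{X_t})\to H^0(S,R^1g_{t*}\sO_{X_t})$ to the class $[\hat\eta_t]$. Using that the relative Dolbeault sheaves $\sA^{0,\bullet}_{X_t/S}$ are fine and hence $g_{t*}$-acyclic, the section produced by this edge map is represented fiberwise by the image of $\hat\eta_t$ under the projection $\sA^{0,1}_{X_t}\to \sA^{0,1}_{X_t/S}$ from diagram (\ref{diagsh}), which is precisely $\tilde\eta_t$.

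For the second part, naturality of the cup product applied to the exact sequence (\ref{zeta}) and to its restriction to $X_{t,s}$---which, by definition of $\phi_s$ in (\ref{phi}), has class $\phi_s(\zeta_t)$---yields the commutative square
\[
\begin{array}{ccc}
H^0(X_t,\Omega^1_{X_t/S})&\xrightarrow{\zeta_t\cup}&H^1(X_t,\sO_{X_t})\\
\downarrow&&\downarrow\\
H^0(X_{t,s},\Omega^1_{X_{t,s}})&\xrightarrow{\phi_s(\zeta_t)\cup}&H^1(X_{t,s},\sO_{X_{t,s}})
\end{array}
\]
whose vertical arrows are the restriction maps. Chasing $u_t$ through the square and combining with the previous paragraph yields $[\tilde\eta_{t,s}]=\phi_s(\zeta_t)\cup u_{t,s}$.

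The main obstacle I expect is verifying that the Leray edge-map evaluation at $s$ coincides with the naive pointwise restriction $\hat\eta_t\mapsto \hat\eta_t|_{X_{t,s}}=\tilde\eta_{t,s}$, which is what the diagram chase implicitly uses. An alternative that bypasses this altogether is a direct Berndtsson-style computation on the surface $\varphi^{-1}(s)\subset \sX$, on which $f$ induces a one-parameter deformation of curves $\{X_{t',s}\}_{t'}$. The restriction $\tu|_{\varphi^{-1}(s)}$ is a smooth $(1,0)$-form on this surface, holomorphic on each fiber, restricting on $X_{t,s}$ to $u_{t,s}$; hence it is a Berndtsson representative of the family $\{u_{t',s}\}_{t'}$. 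By construction, the $dt$-component of $\bar\partial(\tu|_{\varphi^{-1}(s)})$ equals $\tilde\eta_{t,s}$, and by \cite[Lemma 2.2]{bern2} applied to this family of curves it represents the cup product of $u_{t,s}$ with the Kodaira-Spencer class of the family, which is exactly $\phi_s(\zeta_t)\cup u_{t,s}$.
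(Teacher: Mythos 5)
Your main argument coincides with the paper's proof: both push $[\hat{\eta}_t]=\zeta_t\cup u_t$ through the Leray edge map $H^1(X_t,\sO_{X_t})\to H^0(S,R^1{g_t}_*\sO_{X_t})$, identify the resulting section with $[\tilde{\eta}_t]$ via the projection $\sA^{0,1}_{X_t}\to\sA^{0,1}_{X_t/S}$ of Diagram (\ref{diagsh}), and then evaluate at a general $s$ using the compatibility of $\zeta_t\cup$ with restriction to $X_{t,s}$ (which the paper phrases through Lemma \ref{ss2} and the sheaf $\ext^1_{g_t}(\Omega^1_{X_t/S},\sO_{X_t})$ rather than your explicit commutative square, but these are the same statement). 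The edge-map/restriction compatibility you flag as the main obstacle is just the standard fact that the Leray edge map computed with the fine relative Dolbeault resolution is induced by restricting representatives to fibers, so your fallback Berndtsson-style computation on $\varphi^{-1}(s)$, while a valid independent check, is not needed.
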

\begin{proof}
The Leray spectral sequence gives the short exact sequence 
\begin{equation}\label{leray}
	0\to H^1(S,\sO_S)\to H^1(X_t,\sO_{X_t})\to H^0(S,R^1{g_t}_*\sO_{X_t})\to 0.
\end{equation}
Since the forms $\eta_t$, $\hat{\eta}_t$ define cohomology classes, we consider the analogue of Diagram (\ref{diagsh})  in cohomology
	\begin{equation*}
		\xymatrix{
			H^1(X_t,\sO_{X_t})\ar[r]\ar[rd]&H^{1,1}(X_t,g^*_t\omega_S^\vee)\\
			&H^0(S,R^1{g_t}_*\sO_{X_t})
		}
	\end{equation*} Here the horizontal arrow is the homomorphism considered in Proposition \ref{firstprop} and the diagonal arrow is given in (\ref{leray}). 
	
	We have seen that the horizontal arrow maps 
	$$
	\zeta_t\cup u_t=[\hat{\eta}_t]\mapsto \xi_t\cup u_t=[\eta_t]
	$$  while the image of $\zeta_t\cup u_t=[\hat{\eta}_t]$ via the diagonal arrow is a section $[\tilde{\eta}_t]\in H^0(S,R^1{g_t}_*\sO_{X_t})$ defined by $\tilde{\eta}_t$ by the previous discussion.
	
	From Lemma \ref{ss2}, $\zeta_t$ gives an element of $H^0(S,\ext^1_{g_t}(\Omega^1_{X_t/S},\sO_{X_t}))$ and $R^1{g_t}_*\sO_{X_t}$  is a vector bundle on $S$ (see \cite[Theorem 2.6]{K}) with general fiber $H^1(X_{t,s},\sO_{X_{t,s}})$, hence $[\tilde{\eta}_t]$ can be interpreted as a section of this vector bundle which associates to the general $s\in S$ the class $\phi_s(\zeta_t)\cup u_{t,s}\in H^1(X_{t,s},\sO_{X_{t,s}})$.
\end{proof}

Since $\tilde{\eta}_{t,s}$ is a $(0,1)$-form on the fiber $X_{t,s}$ in particular it is primitive. In this case $-c_{n-1}\tilde{\eta}_{t,s}\wedge \overline{ \tilde{\eta}_{t,s}}=|\tilde{\eta}_{t,s}|^2$, hence in the curvature formula of Theorem \ref{curv2} the second summand is manifestly non-negative.

\begin{prop}
	\label{curvdef}
	With the same notation from Theorem \ref{curv2}, if $n=2$, the curvature of $E$ on $u$ is given at $t$ by 
	\begin{equation*}
		\langle\Theta_E u_t,u_t \rangle_t=-\lVert P'_\perp{\mu_t}\lVert^2_t{+2c_{n-1}\int_{X_t}|\tilde{\eta}_{t}|^2 g^*_t\textnormal{Vol}S}
	\end{equation*}  where $\tilde{\eta}_t$ is such that $[\tilde{\eta}_{t,s}]=\phi_s(\zeta_t)\cup u_{t,s}$.
\end{prop}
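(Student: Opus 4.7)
The plan is to deduce this from Theorem \ref{curv2} by exploiting the fact that when $n=2$ the fibers $X_{t,s}$ of $g_t$ are curves, which forces the relative $(0,1)$-form $\tilde{\eta}_t$ to be fiberwise primitive in an automatic way. The cohomological identification $[\tilde{\eta}_{t,s}]=\phi_s(\zeta_t)\cup u_{t,s}$ is already recorded in the proposition proved immediately before, so nothing new needs to be said on that front.

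First, I would apply Theorem \ref{curv2} directly to obtain
\begin{equation*}
\langle\Theta_E u_t,u_t \rangle_t = -\lVert P'_\perp \mu_t \lVert^2_t - 2c_{n-1} \int_{X_t} \tilde{\eta}_t \wedge \overline{\tilde{\eta}_t} \wedge g_t^* \textnormal{Vol}\, S.
\end{equation*}
Thus the entire content of the statement reduces to rewriting the integrand of the second summand in terms of the pointwise norm $|\tilde{\eta}_t|^2$ (and flipping the sign accordingly).

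Next, since $\tilde{\eta}_t$ is a smooth section of $\mathcal{A}^{0,1}_{\sX/S}$ (see (\ref{etatilde})) and each fiber $X_{t,s}$ is a compact Riemann surface, the restriction $\tilde{\eta}_{t,s}$ is a $(0,1)$-form on a curve and hence automatically primitive. I would then invoke the standard Hodge identity for primitive $(0,1)$-forms on a Kähler manifold, namely $-c_{n-1}\,\tilde{\eta}_{t,s}\wedge\overline{\tilde{\eta}_{t,s}} = |\tilde{\eta}_{t,s}|^2\, dV_{X_{t,s}}$, where $dV_{X_{t,s}}$ is the volume form induced by the ambient Kähler metric on $\sX$; this is precisely the pointwise identity recorded in the paragraph preceding the statement. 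Decomposing the integral over $X_t$ as an iterated integral over the fibration $g_t \colon X_t \to S$ (Fubini), applying this identity under the inner integral, and reassembling gives
\begin{equation*}
-2c_{n-1} \int_{X_t} \tilde{\eta}_t \wedge \overline{\tilde{\eta}_t} \wedge g_t^* \textnormal{Vol}\, S \;=\; 2c_{n-1} \int_{X_t} |\tilde{\eta}_t|^2 \, g_t^* \textnormal{Vol}\, S,
\end{equation*}
where the pointwise norm $|\tilde{\eta}_t|^2$ is taken with respect to the induced metric along the fibers of $g_t$. Plugging this into the displayed formula from Theorem \ref{curv2} yields the stated identity.

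The only real obstacle is the bookkeeping of the sign and the unimodular constant $c_{n-1}$ in the primitivity identity; but in dimension $n=2$ the inner fibers are one-dimensional, so the identity boils down to the elementary pointwise computation $-c_1\, \bar{a}\,d\bar{z}\wedge a\,dz = |a|^2 \cdot i\, dz\wedge d\bar{z}$, essentially the same computation carried out in Proposition \ref{normeuguali}. This is routine and requires no further ideas.
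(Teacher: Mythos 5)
Your proposal is correct and follows essentially the same route as the paper: the paper's entire justification is the remark immediately preceding the statement, namely that for $n=2$ each $\tilde{\eta}_{t,s}$ is a $(0,1)$-form on the curve $X_{t,s}$, hence automatically primitive, so $-c_{n-1}\tilde{\eta}_{t,s}\wedge\overline{\tilde{\eta}_{t,s}}=|\tilde{\eta}_{t,s}|^2$ and the second summand of Theorem \ref{curv2} can be rewritten with the sign flipped, while the identification $[\tilde{\eta}_{t,s}]=\phi_s(\zeta_t)\cup u_{t,s}$ is supplied by the preceding proposition. Your fiberwise Fubini decomposition and the elementary constant check are exactly the intended (and routine) bookkeeping.
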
 This is Proposition \ref{prop2} of the Introduction.
 
Note that by changing the representative $\tu$ of $u$, the corresponding $\tilde{\eta}_t$ ranges through its cohomology class $[\tilde{\eta}_t]\in H^0(S,R^1{g_t}_*\sO_{X_t})$, as in \cite{bern2}.
So if the class $[\tilde{\eta}_t]$ is zero in $H^0(S,R^1{g_t}_*\sO_{X_t})$ one can change the representative $\tu$ and assume that $\tilde{\eta}_t=0$ not only as a class but as a relative form, so that in particular the $\tilde{\eta}_{t,s}$ are zero. In this case the curvature of $E$ is seminegative at $t$.

 The following examples show cases where the class $\zeta_0$ does not vanish, but the curvature of $E$ is
 seminegative at the point $0\in \Delta$.

 \begin{expl}
 	Consider a hyperelliptic curve $C$ of genus at least 3. Thanks to a famous result of M. Noether, see \cite{C} or \cite[Page 117]{ACGH}, it is well known that the infinitesimal Torelli fails for $C$, that is there exists a smooth family $\sC\to \Delta$ with central fiber $C_0=C$ such that its Kodaira-Spencer class at 0 does not vanish, but has vanishing cup product with every differential form in $H^0(C,\omega_C)$.
 	
 	Consider a smooth curve $S$ and call $\sX:=\sC\times S$. The fibrations
 	$$
 	f\colon \sX\to \Delta\times S\to \Delta
 	$$ allow us to interpret the projection $C_t\times S\to S$ as a deformation of the projection $C_0\times S\to S$ leaving the target fixed.
 	Note that in this example $\zeta_0\in \Ext^1(\Omega^1_{C_0\times S/S},\sO_{C_0\times S})$ is not zero since for example $\phi_s(\zeta_0)\in \Ext^1(\Omega^1_{C_0},\sO_{C_0})$ is exactly the non vanishing Kodaira-Spencer class of $\sC\to \Delta$ at 0 for every $s$. The cup product with $\phi_s(\zeta_0)$ on the other hand is always zero, so by Proposition \ref{curvdef} and the above remarks, the curvature of $E$ at $0\in \Delta$ is seminegative.
 \end{expl}
 \begin{expl}
 Assume that we are in the case of Lemma \ref{ss2} and we can identify $\textnormal{Ext}^{1}(\Omega^1_{X_0/S},\sO_{X_0})\simeq H^0(S,\ext^1_{g_0}(\Omega^1_{X_0/S},\sO_{X_0})).$ Since $\ext^1_{g_0}(\Omega^1_{X_0/S},\sO_{X_0})$ is a sheaf on a curve, it can be decomposed as the direct sum of a torsion sheaf and a torsion free sheaf: $\ext^1_{g_0}(\Omega^1_{X_0/S},\sO_{X_0})=\sT\oplus \sT\sF$. A nontrivial deformation $\zeta_0$ coming from the torsion part $H^0(S,\sT)$ is supported on a Zariski closed set of $S$ hence $\phi_s(\zeta_0)$ is zero for general $s\in S$. In particular by Proposition \ref{curvdef}, the curvature of $E$ at $0\in \Delta$ is seminegative.
 \end{expl}

 \subsection{A seminegativity result}
  
 Proposition \ref{curvdef} gives a seminegativity result for the curvature of an appropriate subbundle of $E$.
 Consider the exact Sequence (\ref{1forme})
\begin{equation*}
	0\to f^*\omega_\Delta\to \Omega^1_{\sX/S}\to \Omega^1_{\sX/\Delta\times S}\to 0
\end{equation*} whose restriction at $X_t$ is the second row of Diagram \ref{diag}. Applying the direct image functor to this sequence we obtain a morphism
$$
f_*(\Omega^1_{\sX/\Delta\times S})\to R^1f_*\sO_{\sX}\otimes \omega_{\Delta}.
$$ Now, recalling that $
f\colon \sX\to  \Delta
$ is the composition of 
$G\colon \sX\to \Delta\times S$ and the projection $p\colon \Delta\times S\to \Delta$, the Grothendieck spectral sequence \cite[Theorem 12.10]{M} gives a morphism 
 $$
  R^1f_*\sO_{\sX}\otimes \omega_{\Delta}\to p_*(R^1G_*\sO_\sX)\otimes \omega_{\Delta}.
  $$
  The composition 
  \begin{equation}\label{comp}
  f_*(\Omega^1_{\sX/\Delta\times S})\to R^1f_*\sO_{\sX}\otimes \omega_{\Delta}\to p_*(R^1G_*\sO_\sX)\otimes \omega_{\Delta}
  \end{equation} is, valued at $t\in \Delta$ and after identification $T_{\Delta,t}\cong \mC$, 
  $$
 H^0(X_t,\Omega^1_{X_t/S})\to H^1(X_t,\sO_{X_t})\to H^0(S,R^1{g_t}_*\sO_{X_t}).
  $$
  This composition gives, on an element $u_t\in H^0(X_t,\Omega^1_{X_t/S})$,
  $$
  u_t\mapsto \zeta_t\cup u_t=[\hat{\eta}_t]\mapsto [\tilde{\eta}_t].
  $$
  \begin{defn}
  	We denote by $K$ the vector bundle associated to the kernel of $
  	f_*(\Omega^1_{\sX/\Delta\times S})\to  p_*(R^1G_*\sO_\sX)\otimes \omega_{\Delta}.
  	$ 
  \end{defn}
  So if we see $u_t\in E_t$ as a relative 1-form for the morphism $g_t$ associating to a general $s\in S$ the form  $u_{t,s}\in H^0(X_{t,s}, \Omega^1_{X_{t,s}})$, the element of $K$ are the sections $u_t$ such that $\phi_s(\zeta_t)\cup u_{t,s}=0\in H^1(X_{t,s},\sO_{X_{t,s}})$. 
  
  As a corollary of Proposition \ref{curvdef} we have Theorem \ref{thm3} of the Introduction.
  \begin{thm}\label{semineg}
  The vector bundle	$K$ is seminegatively curved.
  \end{thm}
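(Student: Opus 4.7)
The plan is to combine Proposition \ref{curvdef} with the standard fact that the curvature of a holomorphic subbundle equipped with the induced Hermitian metric is bounded above, in the Griffiths sense, by the restriction of the ambient curvature.

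First I would verify that $K$ is a holomorphic vector subbundle of $E$. By construction, $K$ is the kernel of the morphism (\ref{comp}) of coherent $\sO_\Delta$-modules from $f_*(\Omega^1_{\sX/\Delta\times S})$ to the locally free sheaf $p_*(R^1 G_*\sO_\sX)\otimes\omega_\Delta$. Since $\Delta$ is a disk, after possibly shrinking $\Delta$ the kernel is locally free, so $K\hookrightarrow E$ is a genuine subbundle inheriting the Hermitian metric from $E$.

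Next, fix $t\in\Delta$ and a local holomorphic section $u$ of $K$, so that $u_t\in K_t$. By the very definition of $K$ and the explicit identification of (\ref{comp}) at the fibre $t$, the class $[\tilde{\eta}_t]\in H^0(S,R^1{g_t}_*\sO_{X_t})$ vanishes. As the author already notes in the paragraph following Proposition \ref{curvdef}, this vanishing allows one to modify the representative $\tu$ (within its equivalence class of additive terms of the form $v\wedge dt$) so that $\tilde{\eta}_t=0$ as a relative form on $X_t$, not just as a cohomology class. Since $\langle\Theta_E u_t,u_t\rangle_t$ is intrinsic and independent of the choice of representative, Proposition \ref{curvdef} applied with this preferred representative yields
$$\langle\Theta_E u_t,u_t\rangle_t = -\lVert P'_\perp\mu_t\rVert^2_t \leq 0.$$

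Finally, since $K$ is a holomorphic subbundle of $E$ with the induced metric, the Gauss--Codazzi identity gives
$$\langle\Theta_K u_t,u_t\rangle_t = \langle\Theta_E u_t,u_t\rangle_t - \lVert\beta(u_t)\rVert^2_t,$$
where $\beta$ denotes the second fundamental form of $K$ in $E$. Combined with the previous inequality, this immediately yields $\langle\Theta_K u_t,u_t\rangle_t\leq 0$, which is exactly the Griffiths seminegativity asserted in the theorem. The main obstacle I anticipate is the technical verification that the representative freedom $\tu\mapsto\tu+v\wedge dt$ genuinely sweeps out the entire cohomology class $[\tilde{\eta}_t]$, so that its vanishing forces the existence of a representative with $\tilde{\eta}_t=0$ pointwise; this is a relative $\bar{\partial}$-solvability statement along the fibres of $g_t$ that is already implicit in the preceding sections and, once granted, reduces the proof to the bookkeeping described above.
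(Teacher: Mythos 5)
Your proposal is correct and follows essentially the same route as the paper: the paper states Theorem \ref{semineg} as a direct corollary of Proposition \ref{curvdef}, using exactly the observation that for sections of $K$ the class $[\tilde{\eta}_t]$ vanishes, so the representative can be adjusted to make $\tilde{\eta}_t=0$ pointwise and the curvature formula reduces to $-\lVert P'_\perp\mu_t\rVert^2_t\leq 0$. Your additional care about $K$ being locally free after shrinking $\Delta$ and the Gauss--Codazzi step for the subbundle are details the paper leaves implicit, and the ``sweeping out the cohomology class'' point you flag as a potential obstacle is precisely what the paper asserts (with reference to \cite{bern2}) in the paragraph following Proposition \ref{curvdef}.
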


    \begin{rmk}
  Consider the map $g_0\colon X_0\to S$. It is well known that the $g_0^*$ gives a homomorphism of Hodge structures $H^1(S,\mC)\to H^1(X_0,\mC)$, hence the quotient $H^0(S,R^1{g_0}_*\mC)$ inherits a Hodge decomposition. Working on $\Delta$ one can consider the associated VHS whose kernel is seminegatively curved by \cite{Z}. In general the bundle $K$ contains this kernel, hence in principle it is not necessarily seminegative.
  \end{rmk}
  \begin{rmk}
  	As pointed out at the beginning of this section, everything is stated in the case of $n=2$ for simplicity, but most of the results still apply in the general case. For example the composition (\ref{comp}) is replaced in the general case by 
  	\begin{equation*}
  		f_*(\Omega^{n-1}_{\sX/\Delta\times S}/{\text{Tors}})\to R^1f_*(\Omega^{n-2}_{\sX/\Delta\otimes S}/{\text{Tors}})\otimes \omega_{\Delta}\to p_*(R^1G_*(\Omega^{n-2}_{\sX/\Delta\otimes S}/{\text{Tors}}))\otimes \omega_{\Delta}
  	\end{equation*}
  	and one can show the seminegativity of its kernel in a similar way.
  \end{rmk}

\end{document}